\newtheorem{thm}{Theorem}[section]
\newtheorem{prop}[thm]{Proposition}
\newtheorem{cor}[thm]{Corollary}
\newtheorem{lem}[thm]{Lemma}
\theoremstyle{definition}
\newtheorem{rem}[thm]{Remark}
\newcommand{\ra}{\rightarrow}
\newcommand{\bk}{\backslash}
\newcommand{\mc}{\mathcal}
\newcommand{\mb}{\mathbb}
\newcommand{\sg}{\sigma}
\renewcommand{\ss}{\substack}
\newcommand{\llf}{\left\lfloor}
\newcommand{\e}{\varepsilon}
\newcommand{\rrf}{\right\rfloor}
\renewcommand{\bar}{\overline}
\begin{document}

\begin{frontmatter}[classification=text]

\title{On Equal Consecutive Values of Multiplicative Functions} 

\author[APM]{Alexander P. Mangerel}
\begin{abstract}
Let $f: \mb{N} \ra \mb{C}$ be a multiplicative function for which
$$
\sum_{p : \, |f(p)| \neq 1} \frac{1}{p} = \infty.
$$
We show under this condition alone
that for any integer $h \neq 0$ the set 
$$
\{n \in \mb{N} : f(n) = f(n+h) \neq 0\}
$$
has logarithmic density 0. We also prove a converse result, along with an application
to the Fourier coefficients of holomorphic cusp forms. 

The proof involves analysing the value distribution of $f$ using the compositions $|f|^{it}$,
relying crucially on various applications of Tao's theorem on logarithmically-averaged correlations of non-pretentious multiplicative functions. Further key inputs arise from the inverse theory of sumsets in continuous additive combinatorics.
\end{abstract}
\end{frontmatter}


\section{Introduction}
There is an expected general phenomenon in analytic number theory that the factorisations of additively coupled positive integers $n$ and $n+h$, $h \geq 1$, are \emph{statistically independent}, in the sense that for \emph{very few} $n \in \mb{N}$ do we expect the features (number of primes, size of prime factors etc.) of the prime factorisation of $n$ to influence the corresponding features of the factorisation of $n+h$. To investigate this phenomenon we are motivated to study the joint value distribution, as $n$ varies, of pairs $(f(n),f(n+h))$, where $f: \mb{N} \ra \mb{C}$ is a multiplicative function.

Bounding the size of the set $\{n \leq x : f(n) = f(n+h)\}$ as $x \ra \infty$, for $h \in \mb{N}$ and $f$ a multiplicative function, is a natural problem, for which some precedent in the literature already exists. For example, when $f = d$, the divisor function,
Erd\H{o}s, Pomerance and Sark\H{o}zy \cite{EPS} proved using sieve theoretic arguments that
$$
|\{n \leq x: d(n) = d(n+1)\}| \ll \frac{x}{\sqrt{\log\log x}},
$$
and thus $d(n) = d(n+1)$ occurs on a set of natural\footnote{The \emph{density} of a set $A \subseteq \mb{N}$ is defined by $\lim_{x \ra \infty} |A \cap [1,x]|/x$, provided the limit exists. The \emph{upper density} of $f$ is given by the same definition with $\limsup$ in place of $\lim$.} density $0$. In a separate paper \cite{EPS2}, the same authors obtain stronger upper bounds for the number of solutions $n \leq x$ to $\sg(n) = \sg(n+1)$ and to $\phi(n) = \phi(n+1)$, where $\sg$ and $\phi$ are, respectively, the sum-of-divisors function and Euler's phi function. The explicit nature of these three examples is significant in the arguments of \cite{EPS} and \cite{EPS2}.

In a recent paper \cite{heckeKM}, Klurman and the author considered a different example of interest, when $(f(n))_n$ corresponds to the sequence of Fourier coefficients of a holomorphic cusp form. More precisely, for $k \geq 2$ let $\phi$ be a weight $k$, arithmetically normalised, holomorphic cuspidal eigenform without complex multiplication for $\text{SL}_2(\mb{Z})$.
Writing $\phi$ in its Fourier expansion
$$
\phi(z) = \sum_{n \geq 1} a_{\phi}(n)e(nz), \quad \text{Im}(z) > 0,
$$
normalised so that $a_{\phi}(1) = 1$, it was shown among other things that each of the sets
$$
\{n \in \mb{N} : |a_{\phi}(n)| < |a_{\phi}(n+h)|\}, \quad \{m \in \mb{N} : |a_{\phi}(m+h)| < |a_{\phi}(m)|\},
$$
has positive density, and in fact it follows from \cite[Thm. 1.7]{heckeKM}
that for every $h \geq 1$ the set of \emph{non-vanishing} coincidences\footnote{The non-vanishing assumption here is necessary given our current knowledge about the vanishing of Fourier coefficients of cusp forms. In particular, if there exists even a single \emph{odd} integer $d_0$ for which $a_{\phi}(d_0) = 0$ then a positive density sequence of coincidences $a_{\phi}(n+d_0) = a_{\phi}(n) = 0$ can be found. 
Indeed, it suffices to choose the set $\{md_0 : m \in \mb{N}, \ (m(m+1),d_0) = 1\}$. Ruling out the existence of such $d_0$ for the Ramanujan $\tau$-function, for example, would require proving a well-known conjecture due to Lehmer.}
$$
\{n \in \mb{N}: \, a_{\phi}(n) = a_{\phi}(n+h) \neq 0\}
$$
has natural density $0$.
This can be understood heuristically from the fact that if $\phi$ has weight $k \geq 2$ then $a_{\phi}(n)$ generically grows like $n^{(k-1)/2 + o(1)}$ (due to work of Deligne), and thus the likelihood of coincidences among non-zero values in such a large set is rather small. 

More generally, we might speculate (as is done in some form on \cite[p. 2]{heckeKM}) that the same conclusion about the paucity of coincidences $f(n) = f(n+h)$ ought to be true for \emph{any} unbounded multiplicative function $f: \mb{N} \ra \mb{C}$ such that $|f(p)| \neq 1$ for most primes $p$ (if, conversely, $|f(p)| = 1$ for many $p$ this would account for numerous coincidences among non-zero values in $(f(m))_m$, as indicated by Proposition \ref{prop:partConv} below).

In this paper, we consider the general problem of bounding the number of solutions in positive integers $n$ to the equation
$$
f(n) = f(n+h) \neq 0,
$$
where $f: \mb{N} \ra \mb{C}$ is an \emph{arbitary}
multiplicative function, provided only that $|f(p)| \neq 1$ sufficiently often (in a precise sense). While we are unable to establish that this set has \emph{natural} density $0$, we do prove that it is a thin set in a weaker sense.

Our main result is the following.
\begin{thm}\label{thm:b0ConcGen}
Let $f_1,f_2: \mb{N} \ra \mb{C}$ be multiplicative functions and let $h \geq 1$. Assume that
\begin{align}
\sum_{p: |f_1(p)| \neq 1} \frac{1}{p} = \infty. \label{eq:thinF1}
\end{align}
Let $a,b \in \mb{N}$, $c,d \in \mb{Z}$ with $ad-bc \neq 0$ and $(a,b) = (c,d) = 1$. Let also $C \in \mb{C} \bk \{0\}$. 
Then the set
$$
\{n \in \mb{N} : f_1(an+b) = Cf_2(cn+d) \neq 0\}
$$
has logarithmic density $0$, i.e., we have
\begin{equation}\label{eq:logDensZeroGen}
\frac{1}{\log x} \sum_{\ss{n \leq x \\ f_1(an+b)f_2(cn+d) \neq 0}} \frac{1_{f_1(an+b) = Cf_2(cn+d)}}{n} = o(1).
\end{equation}
\end{thm}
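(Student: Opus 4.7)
I would argue by contradiction: suppose the set
\[
S := \{n \in \mb{N} : f_1(an+b) = Cf_2(cn+d) \neq 0\}
\]
has upper logarithmic density at least some $\delta^* > 0$, and choose a sequence $x_k \to \infty$ along which $\delta_S(x_k) := (\log x_k)^{-1}\sum_{n \in S,\, n \leq x_k} 1/n \to \delta^*$. The key manipulation is to pass from $f_1, f_2$ to the $1$-bounded multiplicative functions
\[
g_{j,t}(n) := |f_j(n)|^{it}\mathbf{1}_{f_j(n) \neq 0}, \qquad j \in \{1,2\},\ t \in \mb{R},
\]
which are unit-modulus on the support of $f_j$ and, by the defining equation of $S$, satisfy $g_{1,t}(an+b)\overline{g_{2,t}(cn+d)} = |C|^{it}$ on $S$.

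The plan rests on two inputs. First, for Lebesgue almost every $t \in \mb{R}$, the function $g_{1,t}$ should be non-pretentious in the sense that $\mb{D}(g_{1,t},\, n \mapsto \chi(n)n^{i\tau}; \infty) = \infty$ for every Dirichlet character $\chi$ and every $\tau \in \mb{R}$. Hypothesis \eqref{eq:thinF1} is not directly usable for a single $t$, since the divergent sum there may be supported on primes with $|f_1(p)|$ arbitrarily close to $1$; instead one argues by Fubini in $t$ combined with a dyadic decomposition of primes by $|\log|f_1(p)||$, invoking the Granville--Soundararajan pretentious repulsion phenomenon to reduce to a single candidate target $(\chi_t, \tau_t)$, and continuous inverse sumset theory to rule out structured variation of this target across $t$. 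Granted non-pretentiousness, Tao's theorem applied to $g_{1,t}, g_{2,t}$ with the forms $an+b,\ cn+d$ (valid under $ad-bc \neq 0$) gives
\[
T(t;x) := \frac{1}{\log x}\sum_{n \leq x}\frac{g_{1,t}(an+b)\overline{g_{2,t}(cn+d)}}{n} \longrightarrow 0 \quad (x \to \infty) \quad \text{for a.e.}\ t.
\]
Splitting $T(t;x) = T_S(t;x) + T_{S^c}(t;x)$ according to whether $n \in S$ (with the complementary range restricted to $f_1(an+b)f_2(cn+d) \neq 0$), the first piece equals $|C|^{it}\delta_S(x_k) \to |C|^{it}\delta^*$, forcing $T_{S^c}(t;x_k) \to -\delta^*|C|^{it}$ for a.e.\ $t$.

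The final step is a Fourier positivity argument. Observe that $T_{S^c}(t;x) = \hat{\nu}_x(t)$ is the Fourier transform of the positive sub-probability measure
\[
\nu_x := \frac{1}{\log x}\sum_{\substack{n \leq x,\, n \notin S\\ f_1(an+b)f_2(cn+d) \neq 0}}\frac{\delta_{R(n)}}{n}, \qquad R(n) := \log\frac{|f_1(an+b)|}{|f_2(cn+d)|}.
\]
Testing the pointwise convergence $\hat{\nu}_{x_k}(t) \to -\delta^* e^{it\log|C|}$ against the Gaussian $\phi(t) := (2\pi)^{-1/2}e^{-t^2/2}$, whose own Fourier transform $r \mapsto e^{-r^2/2}$ is strictly positive, the dominated convergence theorem (the $\hat{\nu}_{x_k}$ are uniformly bounded by $1$) yields
\[
0 \leq \int e^{-r^2/2}\, d\nu_{x_k}(r) \longrightarrow -\delta^*\, e^{-(\log|C|)^2/2} < 0,
\]
the desired contradiction. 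The hard part of this plan is the first input: establishing non-pretentiousness of $g_{1,t}$ for a.e.\ $t$ uniformly across Dirichlet characters and twists $n^{i\tau}$, using only \eqref{eq:thinF1}. This is precisely where the continuous inverse sumset theory flagged in the abstract becomes indispensable, both to handle the pathological regime in which $|f_1(p)|$ clusters near $1$ and to control how the candidate pretentious target could vary with $t$.
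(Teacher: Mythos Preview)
Your outline has a genuine gap at the ``first input'': the claim that $g_{1,t}(n)=|f_1(n)|^{it}\mathbf{1}_{f_1(n)\neq 0}$ is non-pretentious for almost every $t$, under hypothesis~\eqref{eq:thinF1} alone, is simply false. Take $f_1$ completely multiplicative with $f_1(p)=p$ for every prime $p$; then $|f_1(p)|\neq 1$ for all $p$, so \eqref{eq:thinF1} holds, yet $g_{1,t}(p)=p^{it}$ and hence $\mb{D}(g_{1,t},n^{it};x)=0$ for every $t$ and every $x$. More generally, whenever $|f_1(p)|\approx p^{r}$ on a dense set of primes (for some $r\neq 0$), $g_{1,t}$ pretends to $n^{irt}$ for \emph{all} $t$, and Tao's theorem yields no decay of $T(t;x)$. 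So the difficulty you flag as ``hard'' is not merely hard; it is an obstruction, and no amount of Fubini or repulsion will remove it. (Your Fourier--positivity endgame with the Gaussian is correct and rather elegant, but it never gets to run.)

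The paper does not try to prove non-pretentiousness. It goes in the opposite direction: assuming a $\delta$-lower bound along a sequence $x_j$, Tao's theorem is invoked to \emph{produce} pretentiousness, i.e.\ for each $\alpha$ in a positive-measure set $X\subset[-B,B]$ one gets $\mb{D}(|f_1|^{i\alpha},\psi_\alpha n^{it_\alpha};x_j)\ll_\delta 1$. The continuous inverse sumset input (Ruzsa's iterated-sumset lemma plus the approximate Cauchy equation) is then used not to rule out a pretentious target but to pin down its structure: after clearing the characters, the map $\alpha\mapsto t_\alpha$ extends to all of $[-B,B]$ and is $O_\delta(1/\log x_j)$-close to a linear map $\alpha\mapsto r_j\alpha$. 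The decisive extra idea, absent from your sketch, is a free scaling parameter: one replaces $|f_1|$ by $|f_1|^{A}$ throughout, observes that the $\delta$-lower bound (and hence every constant coming from Tao's theorem) is \emph{independent of $A$}, and concludes that $|f_1(p)|^{A}/p^{r_j(A)}$ lies in a fixed interval for all $p$ outside a set that is $O_\delta(1)$-sparse uniformly in $A$. Sending $A\to\infty$ collapses this to $|f_1(p)|=1$ outside a sparse set, contradicting \eqref{eq:thinF1}. In short, the inverse sumset theory establishes that the only possible pretentious behaviour is of the ``power of $p$'' type, and the $A$-trick shows that this type is incompatible with the hypothesis unless the exponent is zero.
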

\begin{rem}
Ideally, we would like to prove the same result with the notion of ``logarithmic density'' replaced by ``natural density''. Unfortunately, we are (unconditionally) limited in this matter by the fact that the linchpin of our proof, Tao's theorem on logarithmically-averaged binary correlations of multiplicative functions (see Theorem \ref{thm1:Tao} below) has not been proven for Ces\`{a}ro-averaged binary correlations. The corresponding result is, however, expected to hold by a conjecture due to Elliott (see \cite{Tao} for a discussion), and assuming Elliott's conjecture our result would immediately be upgraded in this way. 

While a Ces\`{a}ro-averaged result of this kind is known at \emph{almost every scale} $x$ (see \cite[Cor. 1.13]{TT}), this variant would be incompatible with our objectives, since in order to prove Theorem \ref{thm:b0ConcGen} we already need to restrict our attention to an arbitrary infinite subsequence of scales which may not intersect with the ``good'' scales of such a theorem.
\end{rem}
\begin{rem}
The non-vanishing condition $f_1(an+b)f_2(cn+d) \neq 0$ is necessary\footnote{We thank the anonymous referee for pointing out this simple construction.} in this theorem. Indeed, take any complex-valued multiplicative function $f$ and define $\tilde{f} := \mu^2 f$. Then $\tilde{f}$ is supported on squarefree integers. For any fixed, distinct primes $p_1,p_2$ not dividing $ac(ad-bc)$ the set
$$
\{n \in \mb{N} : \tilde{f}(an+b) = \tilde{f}(cn+d)\}
$$
contains the positive density set
$$
\{n \in \mb{N} : p_1^2|(an+b), \quad p_2^2|(cn+d)\}.
$$
For any $n$ in this subset both $an+b$ and $cn+d$ are \emph{not} squarefree, and hence $\tilde{f}(an+b) = 0 = \tilde{f}(cn+d)$. 
\end{rem}
\begin{rem}
When $f_1,f_2$ are completely multiplicative, the condition $(a,b) = (c,d) = 1$ may be dropped, as we may always reduce to the coprime case (by adjusting the value of $C$). 
\end{rem} 
Theorem \ref{thm:b0ConcGen} can be compared with the main result of \cite{EllKish}. There, Elliott and Kish prove that if $f: \mb{N} \ra \mb{C}$ is completely multiplicative and
$$
f(an+b) = Cf(cn+d) \neq 0 \text{ for \emph{all} } n \geq n_0,
$$
where $ad-bc \neq 0$ and $C \in \mb{C} \bk \{0\}$, then there is a Dirichlet character $\chi$ of some modulus $m = m(a,b,c,d)$ such that 
$$
f(p) = \chi(p) \text{ for all } p \nmid m.
$$ 
When $f_1 = f_2=f$ our result addresses the ``$1\%$ world'' version of their problem, showing that if the (logarithmic) proportion of $n$ that satisfy the equation is $\geq \delta$, for $\delta > 0$ a small parameter, then 
\begin{equation} \label{eq:contraF1}
\sum_{p : |f(p)| \neq 1} \frac{1}{p} < \infty.
\end{equation}
While this is consistent with the result in \cite{EllKish}, in our setting no further structural information about $f$ can possibly be gleaned, and in particular $f$ need not behave like a Dirichlet character, or even be pretentious (see Section \ref{subsec:pret} for the relevant definition)! Indeed, it is known for instance \cite[Cor. 1.7]{Tao} that the M\"{o}bius function $\mu$ satisfies 
$$
\mu(n) = \mu(n+1) \neq 0 \text{ on a set of logarithmic density } \frac{1}{2}\prod_p (1-2/p^2),
$$
and it is well-known that $\mu$ is non-pretentious. In fact, 
we may obtain the following (elementary) converse for real-valued $f$; for simplicity we constrain ourselves to a single case of affine maps $an+b$ and $cn+d$.
\begin{prop} \label{prop:partConv}
Let $f: \mb{N} \ra \mb{R}\bk\{0\}$ be a multiplicative function for which \eqref{eq:contraF1} holds. 
Then there is a $u \in \{-1,+1\}$ 
such that
$$
\limsup_{x \ra \infty} \frac{1}{\log x}\sum_{\ss{n \leq x}} \frac{1_{f(n) = uf(n+2)}}{n} > 0.
$$
\end{prop}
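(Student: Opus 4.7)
The plan is to show that $f$ is forced to take values in $\{-1,+1\}$ on a positive-density subset of $\mb{N}$ and then invoke pigeonhole. First, set $B := \{p \text{ prime} : |f(p)| \neq 1\}$; by the hypothesis \eqref{eq:contraF1}, $\sum_{p \in B} 1/p < \infty$. Since $f$ is real-valued and nowhere zero, $f(p) \in \{-1,+1\}$ for every $p \notin B$. Define $\mc{A}$ to be the set of squarefree positive integers having no prime factor in $B$; by multiplicativity, $f(n) = \prod_{p \mid n} f(p) \in \{-1,+1\}$ for every $n \in \mc{A}$.

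The crux is then to show that $\mc{S} := \{n \in \mb{N} : n, n+2 \in \mc{A}\}$ has positive lower natural density. Membership in $\mc{S}$ is equivalent to the local conditions $n \not\equiv 0, -2 \pmod{p}$ for each $p \in B$ and $n \not\equiv 0, -2 \pmod{p^2}$ for each $p \notin B$. The associated singular series
$$
\prod_{p \in B}\left(1 - \frac{\rho_1(p)}{p}\right)\prod_{p \notin B}\left(1 - \frac{\rho_2(p)}{p^2}\right),
$$
with $\rho_1(p), \rho_2(p) \in \{1,2\}$ counting the forbidden residues at $p$, converges absolutely to a strictly positive value, since every factor is positive and $\sum_{p \in B} 1/p < \infty$, $\sum_p 1/p^2 < \infty$. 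A standard sieve---truncating $B$ at a large threshold $Y$, applying inclusion--exclusion on $B \cap [1,Y]$ together with the classical sieve for squarefree twins, and absorbing the tail using $\sum_{p \in B,\, p > Y} 1/p \ra 0$ as $Y \ra \infty$---then yields $|\mc{S} \cap [1,x]| \geq c x$ for some $c > 0$ and all sufficiently large $x$.

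Finally, since lower natural density bounds lower logarithmic density from below,
$$
\liminf_{x \ra \infty} \frac{1}{\log x} \sum_{\ss{n \leq x \\ n \in \mc{S}}} \frac{1}{n} \geq c.
$$
For $n \in \mc{S}$ both $f(n)$ and $f(n+2)$ lie in $\{-1,+1\}$, so $\mc{S}$ decomposes as the disjoint union $\mc{S} = \mc{S}_+ \sqcup \mc{S}_-$, where $\mc{S}_u := \{n \in \mc{S} : f(n) = u f(n+2)\}$ for $u \in \{-1,+1\}$. Writing $1_{\mc{S}} = 1_{\mc{S}_+} + 1_{\mc{S}_-}$ and invoking the elementary inequality $\limsup A_n + \limsup B_n \geq \liminf(A_n + B_n)$ applied to the associated logarithmic averages, at least one value of $u$ yields $\limsup_{x \ra \infty}(\log x)^{-1} \sum_{n \leq x} 1_{\mc{S}_u}(n)/n \geq c/2$; since $\mc{S}_u \subseteq \{n : f(n) = uf(n+2)\}$, this is the desired conclusion. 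The only obstacle of substance is the sieve lower bound for $\mc{S}$: because $B$ may be infinite one cannot apply inclusion--exclusion to all of $B$ at once, but the convergence $\sum_{p \in B} 1/p < \infty$ makes the truncation argument routine.
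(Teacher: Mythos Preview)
Your proposal is correct and follows essentially the same route as the paper: restrict to those $n$ for which $n$ and $n+2$ are squarefree and coprime to the thin set $B=\{p:|f(p)|\neq 1\}$, establish by a zero-dimensional sieve (using $\sum_{p\in B}1/p<\infty$) that this set has positive lower density, and then pigeonhole on the sign $u\in\{-1,+1\}$. The only cosmetic difference is that the paper first replaces $f$ by $|f|$ and works with the condition $|f(n)|=|f(n+2)|$ before splitting into signs, whereas you work directly with $f(n),f(n+2)\in\{-1,+1\}$; the underlying sieve and the final pigeonhole are the same.
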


Theorem \ref{thm:b0ConcGen} allows us to recover the result
\begin{equation}\label{eq:logDensaPhi}
\lim_{x \ra \infty} \frac{1}{\log x} \sum_{\ss{n \leq x \\ a_{\phi}(n)a_{\phi}(n+h) \neq 0}} \frac{1_{a_{\phi}(n) = a_{\phi}(n+h)}}{n} = 0
\end{equation}
for the sequence of Fourier coefficients $(a_{\phi}(n))_n$ of any holomorphic cusp form without CM for $\text{SL}_2(\mb{Z})$.
While this is weaker than what is proved in \cite{heckeKM}, \eqref{eq:logDensaPhi} may be shown \emph{without} appealing to an effective version of the Sato-Tate theorem such as that found in \cite{Tho}. As such, we may avoid having to apply the deep theorems of Newton-Thorne \cite{NeTh} on the functoriality of symmetric power lifts of holomorphic cusp forms. 
Indeed, our proof (see Section \ref{sec:TauApp}) requires nothing more than:
\begin{itemize}
\item
the prime number theorem for Rankin-Selberg $L$-functions, and 
\item Deligne's bound for the prime-indexed Fourier coefficients $|a_{\phi}(p)|$,
\end{itemize}
both of which apply (at least conjecturally and in some cases unconditionally) to the sequence of coefficients of the standard $L$-function of a broader collection of cuspidal automorphic forms.
\begin{cor} \label{cor:RamTau}
Let $h\geq 1$ be fixed
and let $\phi_1,\phi_2$ be holomorphic cusp forms for the full modular group. 
Then the set
$$
\{n \in \mb{N} : a_{\phi_1}(n) = a_{\phi_2}(n+j) \neq 0 \text{ for some } 1 \leq |j| \leq h\}
$$ 
has logarithmic density 0. 
\end{cor}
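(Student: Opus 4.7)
The plan is to apply Theorem \ref{thm:b0ConcGen} to $f_1 = a_{\phi_1}$ and $f_2 = a_{\phi_2}$, which are multiplicative since the $\phi_i$ are Hecke eigenforms. For each fixed $j$ with $1 \leq |j| \leq h$ I would cast the equation $a_{\phi_1}(n) = a_{\phi_2}(n+j)$ in the required form $f_1(an'+b) = f_2(cn'+d)$ via a trivial variable shift: for $j > 0$ take $(a,b,c,d) = (1,1,1,j+1)$ and $C = 1$, so that $(a,b) = (c,d) = 1$ and $ad - bc = j \neq 0$; an analogous choice handles $j < 0$. A finite union over the $2h$ values of $j$ then preserves logarithmic density $0$, so everything reduces to verifying hypothesis \eqref{eq:thinF1} for $a_{\phi_1}$.

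The heart of the proof is to show $\sum_{p : |a_{\phi_1}(p)| \neq 1} 1/p = \infty$, which I would establish by contradiction. Assuming this sum is finite, set $A = \{p : |a_{\phi_1}(p)| \neq 1\}$ and $B_A(x) = \sum_{p \in A,\, p \leq x} \log p$. In the arithmetic normalisation of weight $W \geq 2$, the PNT for the Rankin--Selberg $L$-function $L(s, \phi_1 \times \bar\phi_1)$ yields
\[
\sum_{p \leq x} |a_{\phi_1}(p)|^2 \log p \sim C x^W
\]
for some $C > 0$; the contribution from primes with $|a_{\phi_1}(p)| = 1$ is $\leq \sum_{p \leq x} \log p = O(x) = o(x^W)$, so the same asymptotic persists when the sum is restricted to $A$. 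A Cauchy--Schwarz step (with weight $\log p$) together with Deligne's bound $|a_{\phi_1}(p)| \leq 2\, p^{(W-1)/2}$ then gives
\[
(Cx^W)^2 \;\lesssim\; \left(\sum_{p \leq x} |a_{\phi_1}(p)|^4 \log p\right) B_A(x) \;\ll\; x^{2W-1}\, B_A(x),
\]
whence $B_A(x) \gg x$. A routine partial summation now yields $\sum_{p \in A,\, p \leq x} 1/p \gg \log\log x \to \infty$, contradicting the assumption.

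The main obstacle is the Cauchy--Schwarz manoeuvre above, but it requires nothing beyond the two ingredients flagged just before the corollary: Rankin--Selberg PNT and Deligne's bound. Once \eqref{eq:thinF1} holds, Theorem \ref{thm:b0ConcGen} is applied to each $j$ separately, and a finite union over $1 \leq |j| \leq h$ completes the proof.
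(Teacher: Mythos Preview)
Your proposal is correct and follows essentially the same route as the paper: reduce to each fixed shift $j$, apply Theorem~\ref{thm:b0ConcGen}, and verify hypothesis~\eqref{eq:thinF1} for $a_{\phi_1}$ using only the Rankin--Selberg prime number theorem together with Deligne's bound. The paper's verification of~\eqref{eq:thinF1} is marginally more direct than your Cauchy--Schwarz step: it works dyadically and simply observes that the primes with $|a_{\phi}(p)|=1$ contribute $\ll y^{2-k}$ to the Rankin--Selberg sum on $[y,2y]$ (since $|\lambda_\phi(p)|^2 = p^{-(k-1)}$ there), which forces $\sum_{y\le p\le 2y,\ |a_\phi(p)|\neq 1}\log p \geq y/5$ without any fourth-moment input.
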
 
\begin{rem}
Note that the claim is trivial if, say, $\phi_1$ is assumed to have CM. In this case, it is known \cite{Ser} that
$$
|\{n \leq X : a_{\phi_1}(n) \neq 0\}| \asymp \frac{X}{\sqrt{\log X}},
$$
which necessarily implies that $\{n \in \mb{N} : a_{\phi_1}(n) = a_{\phi_2}(n+h) \neq 0\}$ has logarithmic density $0$. 
\end{rem}
\begin{rem}
A result like Corollary \ref{cor:RamTau} may be proved for the sequences of coefficients $(a_{\pi_j}(n))_n$ of the standard $L$-functions of fixed cuspidal automorphic representations $\pi_j$ for $\text{GL}_{m_j}$, $m_j \geq 1$, with unitary central character ($j = 1,2$), assuming that at least one of $\pi_1,\pi_2$ satisfies the Generalised Ramanujan Conjecture (see e.g. \cite[Sec. 1.3.1]{ManDB} for relevant preliminaries). We leave this extension to the interested reader.
\end{rem}

\subsection{Proof Ideas} \label{sec:proofIdeas}
\subsubsection{Proof strategy towards Proposition \ref{prop:partConv}}
We prove Proposition \ref{prop:partConv} in Section \ref{sec:TrivSieve} by studying the equation $|f(n)| = |f(n+2)|$ for \emph{squarefree} $n$ and $n+2$, noting that since $f$ is real-valued this implies that one of $f(n) = f(n+2)$ or $f(n) = -f(n+2)$ holds. Now, as $f$ is multiplicative and $n$ and $n+2$ are squarefree,
$$
|f(n)| = |f(n+2)| \text{ whenever } |f(p)| = 1 \text{ for all } p|n(n+2).
$$
Writing $\mc{P}_{\neq 1} := \{p : |f(p)| \neq 1\}$, it thus suffices to show that the set\footnote{For a set $S \subset \mb{N}$ we write $(n,S) = 1$ if $(n,m) = 1$ for all $m \in S$.}
$$
\{n \in \mb{N} : (n(n+2),\mc{P}_{\neq 1}) = 1\}
$$
has positive density. This follows straightforwardly from a zero-dimensional sieve, using crucially the fact that $\sum_{p \in \mc{P}_{\neq 1}} p^{-1} < \infty$.

\subsubsection{Proof strategy towards Theorem \ref{thm:b0ConcGen}} 
The proof of Theorem \ref{thm:b0ConcGen} is the main goal of the paper. 
We will assume for the sake of contradiction the existence of a (small) parameter $\delta \in (0,1)$ and an infinite increasing sequence $(x_j)_j$ such that
\begin{equation}\label{eq:converseCond}
\frac{1}{\log x_j} \sum_{\ss{n \leq x_j \\ f_1(an+b)f_2(cn+d) \neq 0}} \frac{1_{f_1(an+b) = Cf_2(cn+d)}}{n} \geq \delta, \quad j \geq 1.
\end{equation}
We seek a contradiction by obtaining from \eqref{eq:converseCond} precise information about the sequence $(|f(p)|)_p$. 

In order to analyse the condition \eqref{eq:converseCond} we 
appeal to a deep theorem of Tao \cite{Tao} (see Theorem \ref{thm1:Tao} below) on logarithmically-averaged correlations of multiplicative functions $g_1,g_2$ taking values in $\mb{U}$, the closed unit disc. This result implies structural information about the prime-indexed sequences $(g_j(p))_{p}$, $j = 1,2$, whenever 
$$
\lim_{x \ra \infty} \frac{1}{\log x} 
\left|
\sum_{n \leq x} \frac{g_1(an+b)g_2(cn+d)}{n}
\right| \neq 0.
$$
To apply this we ``project'' our multiplicative functions $f_1,f_2$ onto $\mb{U}$ in a manner that retains their multiplicativity. 

For ease of exposition, 
suppose that our affine linear forms are $(an+b,cn+d) = (n,n+h)$, $h \geq 1$, that $C = 1$ and that $f_1 = f_2 = f$. 
We observe that for $t \in \mb{R}$ 
the composition
$$
|f|_t(n) := \begin{cases} |f(n)|^{it} &\text{ if $f(n) \neq 0$} \\ 0 &\text{ if $f(n) = 0$} \end{cases}
$$
is a well-defined multiplicative function that takes values in $\mb{U}$. \\
Let $A \geq 1$ be a parameter to be chosen later. The function
$$
g(n) := 
A\log |f(n)| 
$$
is well-defined on the set $\{n \in \mb{N} : f(n) \neq 0\}$, and is additive when restricted to that set. In particular, we have
$$
f(n) = f(n+h) \neq 0 \Rightarrow g(n) = g(n+h),
$$
and also\footnote{Here we use the standard notation $e(y) := e^{2\pi i y}$ for $y \in \mb{R}$.} $|f|_{2\pi At}(n) = e(t g(n))$. Using a Fourier analytic identity, we thus show that if \eqref{eq:converseCond} holds then there is a bounded set $X \subset \mb{R}$ with positive Lebesgue measure such that for each $y \in X$ we have lower bounds 
$$
\frac{1}{\log x_k} \left|\sum_{n \leq x_k} \frac{|f|_{2\pi A y}(n) |f|_{-2\pi A y}(n+h)}{n}\right| \gg_{\delta} 1.
$$
For each of these $y \in X$, Tao's theorem implies the existence of a Dirichlet character $\chi_y$ of conductor depending only on $\delta$, and $t_y \in \mb{R}$ of controlled size such that $f_y$ pretends to be a twisted character $\chi_y(n) n^{it_y}$ (see Section \ref{subsec:pret} for a relevant definition). By passing to a positive measure subset $X'$ of $X$ if needed we may assume that $\chi_y$ is the same for all $y$, and so we focus on studying the dependence of $t_y$ on $y$. 

Using an idea going back to Hal\'{a}sz \cite{Hal} and subsequently refined by Ruzsa \cite{Ruz}, we show that the mapping $y \mapsto t_y$ extends to an entire interval $I$ and is uniformly approximated on $I$ by a linear function $y \mapsto ry$ for some $r \in \mb{R}$ depending at most on $\delta$.  In this way we find
that for most primes $p \leq x_k$ and for all $y \in I$, 
$$
|f(p)|^{2\pi i A y} \approx p^{2\pi it_y} \approx p^{2\pi i r y},
$$
and thus that $|f(p)|^A \approx p^r$ for ``typical'' primes $p$. Since $A$ is arbitrarily large, we deduce that $|f(p)| = 1$ outside of a sparse set of primes, which contradicts our assumption \eqref{eq:thinF1}.\\
\begin{rem}
In the recent work \cite{ManInhom}, we considered the variant problem of bounding the size of sets 
$$
\{n \leq x : f(n+a) = f(n) + b\}, \text{ where } ab \neq 0, \, x \geq 1,
$$
for $f$ an \emph{integer-valued} multiplicative function. We used this together with the work in the present paper to obtain a partial classification of all unbounded multiplicative functions $f: \mb{N} \ra \mb{N}$ for which the set
$$
\{n \in \mb{N} : |f(n+1)-f(n)| \leq C\},
$$
i.e., the set of $n$ for which the gaps between consecutive values of $f(n)$ is bounded by some $C > 0$, has positive upper density.

Unlike in the \emph{homogeneous} (i.e., $a = 0$) problem considered in this paper, examples do exist  where the number of such solutions has positive density (e.g., take $a = b = 1$, then $f(n) = n$ is such an example). 
In addition to the ``archimedean'' projections $|f(n)|^{it}$ applied here, we also consider corresponding ``non-archimedean'' projections $\chi(f(n))$, for $\chi$ a Dirichlet character to a suitably chosen modulus $q \geq 1$. The application of these latter projections were developed in connection to the present paper, but thanks to a neat observation by the anonymous referee these turned out to be unnecessary here.
\end{rem}

\section{Background Results about Multiplicative Functions}
In this section we collect some background notions and results about multiplicative functions and pretentious number theory to be used in the sequel. 
\subsection{Properties of the pretentious distance}\label{subsec:pret}
A key r\^{o}le is played by the pretentious distances (in the sense of Granville and Soundararajan). Precisely, for arithmetic functions $f,g : \mb{N} \ra \mb{U}$ and $x \geq 2$, define
$$
\mb{D}(f,g;x) := \left(\sum_{p \leq x} \frac{1-\text{Re}(f(p)\bar{g}(p))}{p}\right)^{1/2}, \quad
\mb{D}(f,g;\infty) := \lim_{x \ra \infty} \mb{D}(f,g;x).
$$
Clearly, $0 \leq \mb{D}(f,g;x)^2 \leq 2\log\log x + O(1)$ by Mertens' theorem. When $\mb{D}(f,g;\infty) < \infty$ (so that the upper bound $2\log\log x + O(1)$ is far from the truth) we say that $f$ \emph{pretends to be} $g$ (symmetrically, $g$ \emph{pretends to be} $f$), or that $f$ is \emph{$g$-pretentious}. \\
The pretentious distance functions satisfy the \emph{pretentious triangle inequality}, which we will use in the following two forms \cite[Lem. 3.1]{GSPret}. For functions $f,g,h, f_1,f_2,g_1,g_2: \mb{N} \ra \mb{U} 
$, we have
\begin{align}
&\mb{D}(f,h;x) \leq \mb{D}(f,g;x) + \mb{D}(g,h;x), \label{eq:usualTri}\\
&\mb{D}(f_1f_2,g_1g_2;x) \leq \mb{D}(f_1,g_1;x) + \mb{D}(f_2,g_2;x).\label{eq:multTri}
\end{align}
Applying \eqref{eq:multTri} inductively, we can show that for $f,g : \mb{N} \ra \mb{U}$ and $m \geq 1$,
\begin{equation}\label{eq:pretTrimthPow}
\mb{D}(f^m,g^m;x) \leq m\mb{D}(f,g;x).
\end{equation}
This result implies, for instance, that if $f$ pretends to be $g$ then $f^m$ pretends to be $g^m$. The latter is especially helpful when, say, $g$ takes its non-zero values in roots of unity of some order  (as is the case for instance for Dirichlet characters), in which case $g^m = |g|$, and thus $f^m$ pretends to be a function taking values $0$ or $1$ (and thus is, in a precise sense, ``close'' to being non-negative). \\
Note that when $f(p),g(p) \in S^1$ we may rewrite the numerators of the summands in the definition of $\mathbb{D}(f,g;x)$ using the relation
$$
|f(p)-g(p)|^2 = 2(1-\text{Re}(f(p)\bar{g}(p))).
$$
In particular, it follows that if $f$ is $g$-pretentious and $\eta > 0$ then the set 
$$
\mc{G}_{\eta} := \{p : |f(p)-g(p)| > \eta\}
$$
satisfies
$$
\sum_{\ss{p \leq x \\ p \in \mc{G}_{\eta}}} \frac{1}{p} \leq \eta^{-2} \sum_{p \leq x} \frac{|f(p)-g(p)|^2}{p} = \eta^{-2}\mb{D}(f,g;x)^2 \ll_{\eta} 1.
$$
For convenience, in the sequel we will refer to a subset of primes $S$ as being \emph{sparse} (respectively \emph{$C$-sparse}) if 
$$
\sum_{\ss{p \leq x \\ p \in S}} \frac{1}{p} \ll 1,
$$
(respectively, $\ll_C 1$) as $x \ra \infty$.

The Archimedean characters $n \mapsto n^{it}$, $t \in \mb{R}$, form an important class of examples of multiplicative functions with non-zero mean values and correlations (see Theorem \ref{thm1:Tao} below, for example). In many results in the sequel we will need control of the distance between $n^{it}$ and $1$. This is provided by the following standard result.


\begin{lem} \label{lem:nitTo1}
Let $\e > 0$ and let $x \geq x_0(\e)$. If $10 \leq |t| \leq x^2$ then
\begin{equation}\label{eq:largeTD}
\mb{D}(n^{it},1;x)^2 \geq (1/3-\e) \log\log x.
\end{equation}
Moreover, if $|t| \leq 10$ then
\begin{equation}\label{eq:smallTD}
\mb{D}(n^{it},1;x)^2 = \log(1+|t|\log x) + O(1).
\end{equation}
\end{lem}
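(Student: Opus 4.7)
The plan is to relate $\mb{D}(n^{it},1;x)^2$ to $\log|\zeta(\sigma+it)|$ on a line just to the right of the critical axis, and then invoke standard zeta-function estimates. Throughout set $\sigma := 1 + 1/\log x$.

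\textbf{Setup.} The first step is to establish the identity
$$
\mb{D}(n^{it},1;x)^2 = \log\log x - \log|\zeta(\sigma+it)| + O(1).
$$
This proceeds from three standard ingredients. First, from the Euler product $\log\zeta(s) = \sum_p p^{-s} + O(1)$ valid for $\text{Re}(s) \geq 1$ (the error absorbing absolutely convergent higher-prime-power terms), one has $\log|\zeta(\sigma+it)| = \sum_p \cos(t\log p)/p^\sigma + O(1)$ and $\log\zeta(\sigma) = \sum_p p^{-\sigma} + O(1)$. Second, $\log\zeta(\sigma) = \log\log x + O(1)$ by the Laurent expansion of $\zeta$ at $s=1$, while $\sum_{p \leq x} p^{-1} = \log\log x + O(1)$ by Mertens. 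Third, partial summation shows that $\sum_{p > x} p^{-\sigma} = O(1)$ and $\sum_{p \leq x}|p^{-1} - p^{-\sigma}| = O(1)$, using $|1 - p^{1-\sigma}| \leq \log p/\log x$ for $p \leq x$ together with $\sum_{p \leq x} \log p/p = \log x + O(1)$.

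\textbf{Proof of \eqref{eq:smallTD}.} For $|t| \leq 10$, the point $\sigma+it$ sits within bounded distance of the simple pole of $\zeta$ at $s=1$, and the Laurent expansion $\zeta(s) = (s-1)^{-1} + O(1)$ yields $|\zeta(\sigma+it)| = |\sigma+it-1|^{-1} + O(1)$. Since $|\sigma+it-1|^2 = (1/\log x)^2 + t^2$, elementary case analysis according as $|t| \leq 1/\log x$, $1/\log x \leq |t| \leq 1$, or $1 \leq |t| \leq 10$ (with the last regime using that $\zeta$ is bounded away from $0$ on the compact arc $\{1+it: 1 \leq |t| \leq 10\}$, by the Prime Number Theorem) gives $\log|\zeta(\sigma+it)| = -\log\max(1/\log x, |t|) + O(1)$. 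Substituting into the setup identity yields
$$
\mb{D}(n^{it},1;x)^2 = \log\max(1, |t|\log x) + O(1) = \log(1+|t|\log x) + O(1).
$$

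\textbf{Proof of \eqref{eq:largeTD}.} For $10 \leq |t| \leq x^2$ the Laurent approximation is no longer effective ($|\sigma+it-1| \asymp |t|$ need not be small), so instead I would invoke the Vinogradov--Korobov estimate
$$
|\zeta(1+it)| \ll (\log|t|)^{2/3}, \quad |t| \geq 3,
$$
whose extension to the line $\sigma = 1 + 1/\log x$ is obtained by Phragm\'en--Lindel\"of interpolation with the trivial bound $|\zeta(2+it)| \leq \zeta(2) = O(1)$. Since $|t| \leq x^2$ gives $\log\log|t| \leq \log\log x + O(1)$, this produces $\log|\zeta(\sigma+it)| \leq \tfrac{2}{3}\log\log x + O(1)$, and plugging into the setup identity one obtains
$$
\mb{D}(n^{it},1;x)^2 \geq \tfrac{1}{3}\log\log x - O(1) \geq (\tfrac{1}{3}-\e)\log\log x
$$
once $x \geq x_0(\e)$.

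\textbf{Main obstacle.} The reduction of $\mb{D}^2$ to bounds on $\log|\zeta(\sigma+it)|$ is elementary bookkeeping. The real content is identifying an analytic input strong enough to deliver the exponent $1/3$ at the scale $|t| \asymp x^2$: a weaker estimate of the form $|\zeta(1+it)| \ll \log|t|$, which is all that elementary partial summation with the Prime Number Theorem would yield, gives only $\mb{D}^2 \geq O(1)$ in that range and is therefore insufficient. The exponent $2/3$ in the Vinogradov--Korobov bound is precisely what furnishes the factor $1/3$ in the statement of the lemma.
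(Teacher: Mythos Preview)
Your proof is correct. The setup identity and the small-$t$ case match the paper's argument essentially line for line.

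For the range $10 \leq |t| \leq x^2$ you take a somewhat different route. The paper does not bound $\log|\zeta(\sigma+it)|$ directly; instead it discards the primes $p \leq y$ with $y = \exp\bigl((\log x^2)^{2/3}(\log\log x^2)^{1/3}\bigr)$, and then shows (via lemmas of Koukoulopoulos on the truncated Euler product $\zeta_y(s) = \prod_{p>y}(1-p^{-s})^{-1}$, which encode the Vinogradov--Korobov zero-free region) that $\mathrm{Re}\sum_{y<p\leq x} p^{-1-it} = O(1)$. This gives $\mb{D}^2 \geq \log(\log x/\log y) + O(1) = \tfrac{1}{3}\log\log x - O(\log\log\log x)$. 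Your approach --- applying the Vinogradov--Korobov upper bound $|\zeta(\sigma+it)| \ll (\log|t|)^{2/3}$ (more precisely $(\log|t|)^{2/3}(\log\log|t|)^{1/3}$, which costs only an additional $\log\log\log x$ and is absorbed by the $\e$) directly in the setup identity --- is equally valid and arguably cleaner, since it avoids the truncation step. The analytic content is the same in both cases: the exponent $1/3$ traces back to Vinogradov--Korobov either way. As a minor remark, the Phragm\'en--Lindel\"of interpolation you invoke is not strictly necessary, since the standard Vinogradov--Korobov bound already holds in a region containing $\sigma \geq 1$.
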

\begin{proof}
Since we could not locate a proof of this precise statement in the literature, we give one here. \\
Set $\sg := 1 + 1/\log x$. Note that for any $t \in \mb{R}$, Mertens' theorem implies that
$$
\mb{D}(1,n^{it};x)^2 = \log\log x - \sum_{p \leq x} \frac{\text{Re}(p^{it})}{p} + O(1) = \log\log x - \log|\zeta(\sg + it)| + O(1),
$$
When $|t| \leq 10$ the Laurent expansion of $\zeta$ near $s = 1$ yields
$$
\zeta(\sg + it) = \frac{1}{\sg-1 + it} + O(1+|\sg + it-1|) = \frac{\log x}{1 + it\log x} + O(1),
$$
so that since $(1+|t| \log x)^2 \leq 2(1+t^2\log^2 x) \leq 2(1+|t| \log x)^2$ we find
$$
\log|\zeta(\sg+it)|  = \log\log x - \frac{1}{2}\log(1+t^2\log^2 x) + O(1) = \log\log x - \log(1+|t|\log x) + O(1).
$$
This proves \eqref{eq:smallTD} in the case $|t| \leq 10$. \\
Next, suppose that $10 \leq |t| \leq x^2$. Set $y := V_{x^2} \geq V_t$, where
$$
V_{t} := \exp((\log^{2/3}(2+|t|) \log\log^{1/3}(2+|t|)), \quad t \in \mb{R}.
$$
For $\text{Re}(s) > 1$ define
$$
\zeta_y(s) := \prod_{p > y} \left(1-\frac{1}{p^s}\right)^{-1}.
$$
Combining \cite[Lem. 3.2]{Kou} and \cite[Lem. 4.1]{Kou} (with $\chi$ the trivial character), we find that
$$
\text{Re}\left(\sum_{y < p \leq x} \frac{1}{p^{1+it}}\right) = \log|\zeta_y(\sg + it)|  + O(1) = O(1).
$$
Therefore, we find
$$
\mb{D}(1,n^{it};x)^2 \geq \sum_{y < p \leq x} \frac{1-\text{Re}(p^{-it})}{p} = \log\left(\frac{\log x}{\log y}\right) + O(1) = \frac{1}{3}\left(\log\log x - \log\log\log x\right) + O(1),
$$
and the claim follows.
%
%
%
\end{proof}

\subsection{Tao's theorem on logarithmically-averaged correlations} 
As described in the previous section, a key tool in our analysis of the distribution of pairs $(f(an+b),f(cn+d))$ is the following theorem of Tao \cite{Tao}, which prescribes necessary conditions for two multiplicative function $g_j: \mb{N} \ra \mb{U}$, $j = 1,2$, to have large, logarithmically-averaged binary correlations.
\begin{thm}[Tao's theorem on binary correlations] \label{thm1:Tao}
Let $\eta > 0$, and let $a,c \in \mb{N}$, $b,d \in \mb{Z}$ with $ad-bc \neq 0$. Let $g_1,g_2: \mb{N} \ra \mb{U}$ be 1-bounded multiplicative functions with the property that for some $x \geq x_0(\eta)$ we have
$$
\frac{1}{\log x}\left|\sum_{n \leq x} \frac{g_1(an+b)g_2(cn+d)}{n}\right| \geq \eta.
$$
Then for each $j = 1,2$ there is a real number $t_j = t_{j,x}$ with $t_j = O_{\eta}(x)$ and a primitive Dirichlet character $\psi_j = \psi_{j,x}$ with conductor $O_{\eta}(1)$ such that
$$
\sum_{p \leq x} \frac{1-\mathrm{Re}(g_j(p)\bar{\psi}_j(p)p^{-it_j})}{p} = O_{\eta}(1).
$$
\end{thm}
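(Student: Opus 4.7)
The plan is to recall Tao's strategy \cite{Tao} and argue by contrapositive: assuming no primitive character $\psi_j$ of bounded conductor and real $t_j$ of size $O_\eta(x)$ achieves the displayed pretentious bound for $j = 1$ or $j = 2$, one shows that the logarithmically averaged binary correlation is $o_\eta(1)$. The proof weaves together three deep ingredients: multiplicativity exploited via \emph{prime dilation}, the Matom\"{a}ki--Radziwill theorem on multiplicative functions in almost all short intervals, and an \emph{entropy decrement} argument. The pretentious conclusion is extracted at the end via a Hal\'{a}sz-type inequality.

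The opening step relates the correlation at scale $x$ to itself at scale $x/p$. Restricting to $n = pm$ with $p$ coprime to $ac(ad-bc)$ and using multiplicativity,
\[
\frac{1}{\log x}\sum_{n \le x} \frac{g_1(an+b)g_2(cn+d)}{n} \approx g_1(p) g_2(p) \cdot \frac{1}{\log(x/p)}\sum_{m \le x/p} \frac{g_1(am+b_p)g_2(cm+d_p)}{m},
\]
with shifted residues $b_p, d_p$ depending on $p \bmod ac(ad-bc)$. Averaging over primes $p \in [P,2P]$ for $P = P(x,\eta)$ sub-polynomial in $x$, Matom\"{a}ki--Radziwill guarantees that for most such $p$ the ``dilated'' correlation agrees with the original up to $o_\eta(1)$. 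Chaining these identities one would like to infer a lower bound of the form $\left|\frac{1}{\log P} \sum_{p \sim P} \frac{g_1(p) g_2(p)}{p}\right| \gg_\eta 1$, together with analogous bounds after twisting by Dirichlet and Archimedean characters.

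The crux --- and Tao's genuinely novel contribution --- is the \emph{entropy decrement} argument that rigorously licences this coupling across different $p$. Viewing $n$ as a random integer sampled logarithmically and measuring the Shannon entropy of $(g_j(an+b), g_j(cn+d))$ conditional on residue information modulo primes in the dyadic range $[P,2P]$, one shows that the mutual information contributed by prime dilates cannot remain large across many scales without violating the finiteness of total entropy. This forced stabilisation supplies the coupling needed to pass from the dilation identity to the prime sum above.

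Finally, the Hal\'{a}sz--Granville--Soundararajan theorem converts these prime-sum lower bounds into structural statements: $g_1 g_2$ must pretend to some twisted character $\chi(n) n^{it}$, and by bilinearity this separates into individual pretentious approximations of $g_1$ and $g_2$, with the quantitative bounds $\mathrm{cond}(\psi_j) = O_\eta(1)$ and $|t_j| = O_\eta(x)$ tracking from the effective form of Hal\'{a}sz's inequality. The hard part is certainly the entropy argument: classical pretentious methods only yield a Ces\`{a}ro-averaged analogue at \emph{almost all} scales (as in \cite{TT}), whereas upgrading to a logarithmic average at \emph{every} scale requires the entropy cascade together with a delicate calibration among the prime scale $P$, the Matom\"{a}ki--Radziwill window, the character moduli, and the depth of the cascade. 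Since this is precisely the content of Tao's paper, in practice one simply cites \cite{Tao}.
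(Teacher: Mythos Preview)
The paper does not prove this theorem at all; it is stated as a background result and attributed to \cite{Tao}, exactly as your final sentence concludes. Your sketch of the entropy decrement strategy is a reasonable high-level outline of Tao's actual argument and thus goes well beyond what the paper provides, though one minor inaccuracy: in Tao's treatment the pretentious conclusion for each $g_j$ individually falls out directly from the Hal\'{a}sz-type step applied to the circle method/generalised von Neumann machinery, rather than by first obtaining a pretentious bound for the product $g_1 g_2$ and then ``separating by bilinearity'' as you suggest.
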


\section{The archimedean projections and the proof of Theorem \ref{thm:b0ConcGen}}

Using Tao's theorem as a principal tool, we will prove the following.
\begin{prop}\label{prop:approxCauchyApp}
Let $f_1,f_2: \mb{N} \ra \mb{C}$ be multiplicative functions, and suppose $f_1$ satisfies
\begin{equation}\label{eq:01Both}
\sum_{p : f_1(p) = 0} \frac{1}{p} < \infty, \quad \sum_{p : |f_1(p)| \neq 1} \frac{1}{p} = \infty.
\end{equation}
Let $a,c \in \mb{N}$, $b,d \in \mb{Z}$ and $C \in \mb{C}$, with $C(ad - bc) \neq 0$. Suppose there is a $\delta > 0$ and an infinite increasing sequence $(x_j)_j$ such that for each $j \geq 1$,
\begin{equation}\label{eq:lowerBdAssump}
\frac{1}{\log x_j} \sum_{\ss{n\leq x_j \\ f_1(an+b)f_2(cn+d) \neq 0}} \frac{1_{f_1(an+b) = Cf_2(cn+d)}}{n} \geq \delta.
\end{equation}
Then if $j \geq j_0(\delta)$ and $A \geq 1$ there is a parameter $r_j = r_j(A) \in [-x_j^2,x_j^2]$ such that if $\delta^{-2} \leq B \ll_{\delta} 1$ then
\begin{equation}\label{eq:fullSumwithr}
\sum_{\ss{p \leq x_j  \\ |f_1(p)|^A/p^{r_j} \notin [e^{-2/B}, e^{2/B}]}} \frac{1}{p} \ll_{\delta} 1,
\end{equation}
the bound \eqref{eq:fullSumwithr} being independent of $A$.
\end{prop}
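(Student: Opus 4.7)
I will follow the archimedean sketch of Section~\ref{sec:proofIdeas}: convert \eqref{eq:lowerBdAssump} via a Fourier majorant into correlation information for the projections $|f_j|_{2\pi A y}$, apply Tao's theorem to extract pretentious structure, and linearize the resulting family of twists in the parameter $y$.

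First, fix a non-negative Schwartz function $\Psi: \mb{R}\to\mb{R}_{\geq 0}$ with $\Psi(0) \geq 1$ and $\hat{\Psi}\geq 0$ compactly supported on some $[-M,M]$. Using the majorant $\mathbf{1}_{f_1(an+b) = Cf_2(cn+d)} \leq \Psi\bigl(A\log|f_1(an+b)/(Cf_2(cn+d))|\bigr)$ on the set where $f_1(an+b)f_2(cn+d) \neq 0$, Fourier inversion combined with the identity $e(yA\log|z|) = |z|^{2\pi iAy}$ will recast \eqref{eq:lowerBdAssump} as
$$\delta \leq \int_{-M}^{M}\hat{\Psi}(y)\,|C|^{-2\pi iAy}\,S_j(y)\,dy, \quad S_j(y) := \frac{1}{\log x_j}\sum_{n \leq x_j}\frac{|f_1|_{2\pi Ay}(an+b)\overline{|f_2|_{2\pi Ay}(cn+d)}}{n}.$$
A pigeonhole argument will then supply a measurable set $X_A \subseteq [-M,M]$ with $|X_A|\gg_{\delta}1$ on which $|S_j(y)|\gg_{\delta}1$.

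Next, for each $y \in X_A$ I will apply Theorem~\ref{thm1:Tao} to $g_1 = |f_1|_{2\pi Ay}$ and $g_2 = \overline{|f_2|_{2\pi Ay}}$, both multiplicative and $\mb{U}$-valued, obtaining primitive Dirichlet characters $\chi_{j,y}$ of conductor $O_{\delta}(1)$ and real parameters $t_{j,y} \in [-O_{\delta}(x_j), O_{\delta}(x_j)]$ with $\mb{D}(|f_j|_{\pm 2\pi Ay},\chi_{j,y}(n)n^{it_{j,y}};x_j)\ll_{\delta}1$. As only $O_\delta(1)$ character pairs arise, a further pigeonhole will produce $X_A' \subseteq X_A$ of measure $\gg_{\delta}1$ on which $\chi_{1,y}=\chi_1$ is fixed. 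The crucial step is then linearization: the multiplicativity $|f_1|_{2\pi A(y_1+y_2)} = |f_1|_{2\pi Ay_1}\cdot|f_1|_{2\pi Ay_2}$, combined with \eqref{eq:usualTri} and \eqref{eq:multTri}, will give
$$\mb{D}\bigl(n^{i(t_{1,y_1+y_2}-t_{1,y_1}-t_{1,y_2})},\chi_1;x_j\bigr)\ll_{\delta}1 \text{ whenever } y_1,y_2,y_1+y_2 \in X_A'.$$
Raising to the $d$-th power ($d$ the order of $\chi_1$) via \eqref{eq:pretTrimthPow} and then invoking Lemma~\ref{lem:nitTo1}\eqref{eq:smallTD} will force $|t_{1,y_1+y_2}-t_{1,y_1}-t_{1,y_2}|\ll_{\delta}1/\log x_j$, making $y \mapsto t_{1,y}$ an approximate homomorphism on a positive-measure set. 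Steinhaus' theorem ensures $X_A' - X_A'$ contains an interval around $0$, and a continuous-setting Hal\'{a}sz--Ruzsa inverse-sumset argument (in the spirit of \cite{Hal, Ruz}) will produce $r_j = r_j(A) \in [-x_j^2, x_j^2]$ and $X_A'' \subseteq X_A'$ of measure $\gg_{\delta}1$ on which $|t_{1,y} - 2\pi r_j y|\ll_{\delta}1/\log x_j$.

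Finally, writing $u_p := |f_1(p)|^A/p^{r_j}$ and letting $q_1$ be the conductor of $\chi_1$, the previous step together with the pretentious bound will yield, for each $y \in X_A''$,
$$\sum_{\ss{p \leq x_j \\ (p,q_1)=1,\,f_1(p)\neq 0}}\frac{1-\text{Re}(u_p^{2\pi iy}\bar{\chi}_1(p))}{p}\ll_{\delta}1,$$
with the set $\{p : f_1(p) = 0\}$ sparse by \eqref{eq:01Both}. Integrating in $y \in X_A''$, swapping sum and integral, and using the standard Fourier decay $\left|\int_{X_A''}u_p^{2\pi iy}\,dy\right|\leq\min(|X_A''|, C/|\log u_p|)$, each prime with $|\log u_p|\geq 2/B$ will contribute at least $|X_A''|/2$ provided $|X_A''|\gtrsim B$; arranging $\Psi$ so that $|X_A''|$ has size of order $B$ as $B$ ranges over $[\delta^{-2},O_\delta(1)]$ will then produce \eqref{eq:fullSumwithr}, uniformly in $A$. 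The hard part will be the linearization: upgrading pointwise pretentious data on a set of positive measure to a single uniformly linear approximation demands quantitatively sharp continuous inverse additive combinatorics, finely tracked so that the resulting slope $r_j$ is independent of $A$ beyond its permitted dependence on the twist parameters $t_{1,y}$.
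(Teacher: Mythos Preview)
Your overall strategy is close to the paper's, but two coupled steps are not correctly handled, and both trace back to one missing idea.

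First, the linearization step. Your approximate Cauchy inequality is established only for triples $y_1,y_2,y_1+y_2$ all lying in the positive-measure set $X_A'$, and Ruzsa's lemma (Lemma~\ref{lem:CauchyRuz}) does not apply in that setting: it requires the inequality to hold for all $\alpha_1,\alpha_2$ with $\alpha_1,\alpha_2,\alpha_1+\alpha_2$ in a full interval. Steinhaus tells you that $X_A'-X_A'$ contains an interval about $0$, but says nothing about the set of such triples being large. The phrase ``continuous-setting Hal\'asz--Ruzsa inverse-sumset argument'' is hiding the real work here.

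Second, and more seriously, your final integration step fails. The claimed decay $\bigl|\int_{X_A''} u_p^{2\pi iy}\,dy\bigr| \leq C/|\log u_p|$ is simply false for a general measurable set $X_A''$: this is the Fourier transform of $\mathbf{1}_{X_A''}$ evaluated at $\log u_p$, and characteristic functions of arbitrary measurable sets have no uniform decay rate (take $X_A''$ a union of many short intervals spaced periodically at distance $1/\log u_p$). Without this decay you cannot conclude that primes with $|\log u_p|\geq 2/B$ each contribute $\gg |X_A''|$, and the factor $\bar\chi_1(p)$ still present in your summand only compounds the problem.

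The paper's fix for both issues is the same maneuver: \emph{before} linearizing, it uses the sumset covering Lemma~\ref{lem:dilateRuzsa} to show $kX \supseteq [-B,B]$ for some $k = O_\delta(1)$, and then \emph{defines} $t_\beta$ for every $\beta \in [-B,B]$ via a chosen decomposition $\beta = \alpha_1+\cdots+\alpha_k$ with $\alpha_i \in X$. Now the approximate Cauchy equation holds on the full interval, Lemma~\ref{lem:CauchyRuz} applies directly, and---crucially---the final averaging is over the full interval $[-mB,mB]$, producing exactly $\text{sinc}(mB\log(|f(p)|/p^{r_j}))$ and the clean bound $1-\text{sinc}(t)\gg 1$ for $|t|\geq 2$. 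The paper also eliminates the Dirichlet character up front (Corollary~\ref{cor:structuredX}) by raising to the $m$-th power with $m$ the factorial of the maximal conductor bound, so that no $\chi_1$ survives into the final integration.
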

\begin{rem} \label{rem:thinF0red}
As is suggested by the hypotheses of Proposition \ref{prop:approxCauchyApp}, we will be able to assume in the sequel that
\begin{align}
\sum_{p: f_1(p) = 0} \frac{1}{p} < \infty, \label{eq:thinF0}
\end{align}
as otherwise Theorem \ref{thm:b0ConcGen} is essentially trivial. \\
Indeed, as the function $1_{f_1(n) \neq 0}$ is multiplicative, it follows
that
$$
\sum_{n \leq x} \frac{1_{f_1(n) \neq 0}}{n} \ll \prod_{\ss{p \leq x}} \left(1 + \frac{1_{f_1(p) \neq 0}}{p}\right) \ll (\log x) \exp\left(\sum_{p \leq x} \frac{1_{f_1(p) \neq 0} - 1}{p}\right)
= o(\log x)
$$
whenever $\sum_{p : f_1(p) = 0} p^{-1} = \infty$. This implies that $\{n: f_1(an+b) = Cf_2(cn+d) \neq 0\}$ has logarithmic density $0$ for any fixed $a,b,c,d,C$ as in the theorem.
\end{rem}

Theorem \ref{thm:b0ConcGen} follows swiftly from Proposition \ref{prop:approxCauchyApp}.
\begin{proof}[Proof of Theorem \ref{thm:b0ConcGen}]
Let $f_1,f_2: \mb{N} \ra \mb{C}$ be multiplicative functions such that \eqref{eq:thinF1} holds. Assume for the sake of contradiction that
$$
\{n \in \mb{N} : f_1(an+b) = Cf_2(cn+d) \neq 0\}
$$
\emph{does not} have logarithmic density $0$, for some $a,c \in \mb{N}$, $b,d \in \mb{Z}$ with $ad\neq bc$, $(a,b) = (c,d) = 1$, and $C \in \mb{C} \bk \{0\}$. Then there is a $\delta > 0$ and a sequence of scales $(x_j)_j$ such that for each $j \geq 1$,
$$
\frac{1}{\log x_j} \sum_{\ss{n \leq x_j \\ f_1(an+b)f_2(cn+d) \neq 0}} \frac{1_{f_1(an+b) = Cf_2(cn+d)}}{n} \geq \delta,
$$
i.e., \eqref{eq:lowerBdAssump} holds. By Remark \ref{rem:thinF0red}, it suffices to assume also that $f_1$ satisfies \eqref{eq:thinF0}, and thus the conditions in \eqref{eq:01Both} both hold by assumption. \\
Now, let $A \geq 1$. 
By Proposition \ref{prop:approxCauchyApp}, for each $j \geq j_0(\delta)$ there exists $r_j  = r_j(A)$ satisfying $|r_j| \leq x_j^2$ such that if we define
$$
I_{A,j} := [e^{-2/(AB)}, e^{2/(AB)} x_j^{x_j^2/A}]
$$
then it follows that
$$
\Sigma_{A,j} := \sum_{\ss{p \leq x_j \\ |f_1(p)| \notin I_{A,j}}} \frac{1}{p} \leq \sum_{\ss{p \leq x_j \\ |f_1(p)|/p^{r_j/A} \notin [e^{-2/(AB)},e^{2/(AB)}]}} \frac{1}{p} \ll_{\delta} 1,
$$
where the bound is independent of $A$. Since $I_{A',j} \subseteq I_{A,j}$ whenever $A' \geq A$, it follows that $A \mapsto \Sigma_{A,j}$ is non-decreasing and uniformly bounded in $A$, for each $j$. Thus, the limit $\Sigma_j := \lim_{A \ra \infty} \Sigma_{A,j}$ exists for each $j \geq j_0(\delta)$, and since moreover
$$
\bigcap_{A \geq 1} I_{A,j} = \{1\}
$$
we deduce that
$$
\Sigma_j = \sum_{\ss{p \leq x_j \\ |f_1(p)| \neq 1}} \frac{1}{p} \ll_{\delta} 1.
$$
Since $(\Sigma_j)_j$ is also bounded and non-decreasing, it converges as $j \ra \infty$, and thus
$$
\sum_{p : |f_1(p)| \neq 1} \frac{1}{p} = \lim_{j \ra \infty} \Sigma_j < \infty,
$$
which contradicts \eqref{eq:thinF1}. The claim of the theorem thus follows.
\end{proof}

\subsection{Reduction to correlations of $1$-bounded functions}
We incorporate the maps $|f|_t := |f|^{it}$ in proving the following, which brings into play lower bounds for correlations of multiplicative functions taking values in $\mb{U}$.

\begin{lem}\label{lem:toBinCorrelft}
Let $A \geq 1$, $\delta \in (0,1)$ and let $B\geq \delta^{-2}$. Let $x$ be a positive real number, sufficiently large with respect to $\delta$, for which
\begin{equation}\label{eq:EqualCond}
\frac{1}{\log x} \sum_{\ss{n \leq x \\ f_1(an+b)f_2(cn+d) \neq 0}} \frac{1_{f_1(an+b) = Cf_2(cn+d)}}{n} \geq \delta.
\end{equation}
Define $F_j := f_j^A$ for $j = 1,2$. Then the set
\begin{equation}\label{eq:largeCorrelft}
X = X_{x,\delta,A,B} := \Bigg\{\alpha \in [-B,B] : \frac{1}{\log x} \left|\sum_{\ss{n \leq x \\ f_1(an+b)f_2(cn+d) \neq 0}} \frac{|F_1|_{\alpha}(an+b)|F_2|_{-\alpha}(cn+d)}{n}\right| \geq \frac{\delta}{16\pi} \Bigg\}
\end{equation}
has Lebesgue measure $\geq \delta/(16\pi)$.
\end{lem}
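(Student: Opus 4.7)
The key observation is that if $n$ satisfies $f_1(an+b) = Cf_2(cn+d) \neq 0$, then $|f_1(an+b)| = |C|\,|f_2(cn+d)|$, whence
\[
|F_1|_\alpha(an+b)\,|F_2|_{-\alpha}(cn+d) \;=\; |C|^{iA\alpha} \qquad \text{for every } \alpha \in \mb{R},
\]
since $|F_j|_\alpha(m) = |f_j(m)|^{iA\alpha}$ whenever $f_j(m) \neq 0$. Thus, after multiplying the integrand in \eqref{eq:largeCorrelft} by the unimodular weight $|C|^{-iA\alpha}$, every coincidence contributes the constant $1$ to the $\alpha$-integrand, regardless of $\alpha$. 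My plan is to isolate this positive contribution via a non-negative Fourier kernel and then compare against a trivial upper bound.

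Concretely, I would take the tent function $K(\alpha) := (1-|\alpha|/B)\mathbf{1}_{[-B,B]}(\alpha)$, whose Fourier transform is the Fej\'{e}r-type kernel $\hat K(y) = 4\sin^2(By/2)/(By^2) \geq 0$, with $\hat K(0) = B$. Writing $\tilde C_\alpha$ for the signed normalised correlation appearing inside the absolute value in \eqref{eq:largeCorrelft}, I consider
\[
I := \int_{-B}^{B} K(\alpha)\,|C|^{-iA\alpha}\,\tilde C_\alpha \, d\alpha.
\]
Interchanging the (finite) sum and the integral, and setting $y_n := A\log\bigl(|f_1(an+b)|/(|C|\,|f_2(cn+d)|)\bigr)$ for each $n$ with $f_1(an+b)f_2(cn+d)\neq 0$, the inner integrand becomes $K(\alpha)e^{i\alpha y_n}$ and its $\alpha$-integral equals $\hat K(-y_n) \geq 0$. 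Since $y_n = 0$ at every coincidence, the hypothesis \eqref{eq:EqualCond} yields
\[
I \;\geq\; \frac{\hat K(0)}{\log x} \sum_{\substack{n \leq x \\ f_1(an+b) = Cf_2(cn+d) \neq 0}} \frac{1}{n} \;\geq\; B\delta.
\]

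On the other hand $|K(\alpha)\,|C|^{-iA\alpha}| \leq 1$, so $|I| \leq \int_{-B}^{B} |\tilde C_\alpha|\,d\alpha$. Splitting this last integral at $X$, and using $|\tilde C_\alpha| \leq 1 + o(1)$ on $X$ (a consequence of $\bigl||F_j|_\alpha(n)\bigr| \leq 1$ and $\sum_{n \leq x}1/n = \log x + O(1)$) together with $|\tilde C_\alpha| < \delta/(16\pi)$ off $X$, I would deduce
\[
B\delta \;\leq\; \mathrm{meas}(X)(1+o(1)) + 2B\cdot\frac{\delta}{16\pi},
\]
so $\mathrm{meas}(X) \geq B\delta\bigl(1-\tfrac{1}{8\pi}\bigr)(1+o(1))^{-1}$. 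Since $B \geq \delta^{-2} \geq 1$, this comfortably exceeds the required bound $\delta/(16\pi)$.

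There is no deep obstacle here; the crucial subtlety is the choice of a kernel on the $\alpha$-side whose Fourier transform is \emph{non-negative}, so that the non-coincidence terms (whose $y_n$ may be of arbitrary magnitude) can only contribute a non-negative amount and cannot cancel out the coincidence gain. A plain rectangular cutoff would produce a $\mathrm{sinc}$-kernel with sign changes and fail precisely on this point, which is why the Fej\'{e}r kernel is the natural choice.
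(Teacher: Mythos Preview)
Your proof is correct and rests on the same core device as the paper---the non-negativity of the Fej\'er kernel's Fourier transform---but you apply it in a dual way. The paper bounds the indicator $1_{f_1=Cf_2}$ above by the tent function $2\max\{1-|t|,0\}$ of the log-ratio $t$ and then Fourier-inverts, producing an $\alpha$-integral weighted by $(\sin(\pi\alpha)/(\pi\alpha))^2$; this forces a truncation of the tail $|\alpha|>B$ at cost $O(1/B)$, and after the splitting argument yields only $\lambda(X)\geq\delta/(16\pi)$. You instead place the tent $K(\alpha)$ directly on the $\alpha$-side and integrate the correlation $\tilde C_\alpha$ against it: since $\hat K\geq 0$, non-coincidence terms can only help, while each coincidence contributes exactly $\hat K(0)=B$. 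This sidesteps any truncation and in fact delivers the much stronger bound $\lambda(X)\gg B\delta\geq\delta^{-1}$, comfortably more than required. Both arguments are short; yours is marginally cleaner and loses less, while the paper's version makes the connection to the Davenport--Heilbronn identity \eqref{eq:FejerFourierPair} explicit.
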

\begin{proof}
Let $X$ denote the set in \eqref{eq:largeCorrelft}. Clearly, $X$ is symmetric and contains 0. 
The equality $f_1(an+b) = Cf_2(cn+d)$ implies that $|F_1(an+b)| = C'|F_2(cn+d)|$, where $C' := |C|^A$. We next relax this equality to an \emph{inequality}. Borrowing an idea from Davenport and Heilbronn \cite{DavHeil}, we observe that for $t \in \mb{R}$,
\begin{equation}\label{eq:FejerFourierPair}
\max\{1-|t|,0\} = \int_{-\infty}^\infty e(t\alpha) \left(\frac{\sin(\pi \alpha)}{\pi \alpha}\right)^2 d\alpha.
\end{equation}
Crucially, this latter expression is absolutely integrable, with integral  $\leq 1$. Writing $g_j(n) := \log |F_j(n)|$, well-defined whenever $f_j(n) \neq 0$, and $\rho := \log |C'| \in \mb{R}$, we have the trivial implication
\begin{align*}
f_1(an+b) = Cf_2(cn+d) \neq 0 &\Rightarrow |g_1(an+b)-g_2(cn+d)-\rho| \leq 1/2,
\end{align*}
from which it follows immediately that
\begin{equation} \label{eq:indicatorUppBd}
1_{f_1(an+b) = Cf_2(cn+d) \neq 0} \leq 2\max\{1-|g_1(an+b)-g_2(cn+d) - \rho|, 0\}.
\end{equation}
Combining \eqref{eq:indicatorUppBd} with \eqref{eq:FejerFourierPair} and inserting the result  into \eqref{eq:EqualCond}, we obtain
\begin{align*}
\delta &\leq \frac{1}{\log x} \sum_{\ss{n \leq x \\ f_1(an+b)f_2(cn+d) \neq 0}} \frac{1_{f_1(an+b) = Cf_2(cn+d)}}{n} \\
&\leq 2 \int_{-\infty}^\infty \left(\frac{1}{\log x} \sum_{\ss{n \leq x \\ f_1(an+b)f_2(cn+d) \neq 0}} \frac{e(g_1(an+b)\alpha) e(-g_2(cn+d)\alpha)}{n}\right) e(-\rho\alpha) \left(\frac{\sin(\pi \alpha)}{\pi \alpha}\right)^2 d\alpha \\
&= 2 \int_{-\infty}^\infty \left(\frac{1}{\log x} \sum_{\ss{n \leq x \\ f_1(an+b)f_2(cn+d) \neq 0}} \frac{|F_1|_{2 \pi \alpha}(an+b)|F_2|_{-2 \pi \alpha}(cn+d)}{n}\right) e(-\rho\alpha)\left(\frac{\sin(\pi \alpha)}{\pi \alpha}\right)^2 d\alpha.
\end{align*}
Bounding the range $|\alpha| > B$ trivially, we obtain after a change of variables $\alpha \mapsto 2\pi \alpha$ that
\begin{align}
\delta &\leq 4\pi \int_{-B}^B \left(\frac{1}{\log x} \sum_{\ss{n \leq x \\ f_1(an+b)f_2(cn+d) \neq 0}} \frac{|F_1|_{\alpha}(an+b) |F_2|_{-\alpha}(cn+d)}{n}\right) e^{-i\rho\alpha} \left(\frac{\sin(\alpha/2)}{ \alpha/2}\right)^2 d\alpha + O\left(\frac{1}{B}\right). \label{eq:truncDH}
\end{align}
Recall that $B \geq \delta^{-2}$ and note that the integrand above is bounded by $1$, 
From the definition \eqref{eq:largeCorrelft} of $X$ and \eqref{eq:truncDH} we derive when $\delta$ is small enough that
$$
\frac{\delta}{2} \leq 4\pi \int_X \left|\frac{1}{\log x} \sum_{\ss{n \leq x \\ f_1(an+b)f_2(cn+d) \neq 0}} \frac{|F_1|_{\alpha}(an+b) |F_2|_{-\alpha}(cn+d)|}{n}\right| \left(\frac{\sin(\alpha/2)}{\alpha/2}\right)^2 d\alpha + \frac{\delta}{4} \leq 4\pi\lambda(X) + \frac{\delta}{4},
$$
where $\lambda(X)$ denotes the Lebesgue measure of $X$. Rearranging, we obtain $\lambda(X) \geq \delta/(16\pi)$, as claimed. 
\end{proof}
As a corollary of this and Tao's theorem, we deduce the following. 
\begin{cor} \label{cor:structuredX}
Assume the notation of Lemma \ref{lem:toBinCorrelft}, and let $f: \mb{N} \ra \mb{C}$ be the multiplicative function given at prime powers by $f(p^k) = 1$ if $f_1(p^k) = 0$, otherwise $f(p^k) = F_1(p^k)$. Then there are positive integers $m, L = O_{\delta}(1)$, independent of $A$,
such that for each
$\beta \in X$ there is a real number $t_{\beta} \in [-Lx,Lx]$ such that
 $$
 \mb{D}(|f^m|_{\beta}, n^{im t_{\beta}};x) = O_{\delta}(1),
 $$
and the upper bound is independent of $A$.
\end{cor}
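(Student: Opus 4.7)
The plan is to apply Tao's theorem (Theorem \ref{thm1:Tao}) directly to the correlations defining $X$ in \eqref{eq:largeCorrelft}, and then to transfer the pretentious information obtained for $|F_1|_\beta$ into a statement about $|f^m|_\beta$ for a uniformly chosen $m$. First I would note that the constraint $f_1(an+b)f_2(cn+d)\neq 0$ appearing in \eqref{eq:largeCorrelft} is redundant, since the summand vanishes whenever either factor is zero. Hence for every $\beta\in X$, the $1$-bounded multiplicative functions $g_1:=|F_1|_\beta$ and $g_2:=|F_2|_{-\beta}$ satisfy the hypothesis of Theorem \ref{thm1:Tao} with $\eta=\delta/(16\pi)$. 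Keeping only the $j=1$ conclusion yields a primitive Dirichlet character $\psi_\beta$ of some conductor $q_\beta\leq Q_\delta$ and a real $t_\beta\in[-Lx,Lx]$, where $Q_\delta$ and $L$ depend only on $\delta$, such that
$$
\mb{D}\bigl(|F_1|_\beta,\;\psi_\beta\cdot n^{it_\beta};\;x\bigr)=O_\delta(1).
$$

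Next I would fix, independently of $\beta$ and $A$, the integer
$$
m:=\mathrm{lcm}\{\phi(q):q\leq Q_\delta\}=O_\delta(1),
$$
so that $\psi_\beta^m$ is the principal character modulo $q_\beta$ for every $\beta\in X$. To bridge $|F_1|_\beta^m$ and $|f^m|_\beta$, observe that $f$ is everywhere nonzero and agrees with $F_1$ at each prime $p$ with $f_1(p)\neq 0$, while at the remaining primes one has $|f^m|_\beta(p)=1$ and $|F_1|_\beta^m(p)=0$. The thin-zero hypothesis in \eqref{eq:01Both} then gives
$$
\mb{D}(|f^m|_\beta,\,|F_1|_\beta^m;\,x)^2=\sum_{\substack{p\leq x\\ f_1(p)=0}}\frac{1}{p}=O(1),
$$
uniformly in $\beta$ and $A$.

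Finally, I would raise the previous displayed pretentious-distance bound to the $m$-th power via \eqref{eq:pretTrimthPow}, obtaining $\mb{D}(|F_1|_\beta^m,\,\psi_\beta^m n^{imt_\beta};\,x)\leq m\cdot O_\delta(1)=O_\delta(1)$. Since $\psi_\beta^m(p)=1$ for $p\nmid q_\beta$ and $\psi_\beta^m(p)=0$ for $p\mid q_\beta$, the contribution from the at most $O_\delta(1)$ primes dividing $q_\beta$ yields $\mb{D}(\psi_\beta^m n^{imt_\beta},\,n^{imt_\beta};\,x)=O_\delta(1)$. Three applications of the triangle inequality \eqref{eq:usualTri} would then assemble these three estimates into the desired bound, with the same $t_\beta$ produced by Tao.

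The only point to guard against is uniformity in $A$, but this is automatic: Tao's theorem supplies $Q_\delta$ and $L$ depending only on $\eta=\delta/(16\pi)$, so the integer $m$ and the scaling $L$ carry no $A$-dependence; and the sparse set $\{p:f_1(p)=0\}$ is intrinsic to $f_1$, so the error term in the bridging step is also $A$-independent. No conceptual obstacle stands in the way; the sole care is to verify that each $O_\delta(1)$ in the chain has been shown to be $A$-free.
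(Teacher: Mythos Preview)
Your proof is correct and follows essentially the same route as the paper: apply Tao's theorem to $|F_1|_\beta$, pass to $f$ using the sparseness of $\{p:f_1(p)=0\}$, and kill the Dirichlet character by raising to a suitable $m=O_\delta(1)$. The only cosmetic differences are that the paper takes $m=\nu!$ (with $\nu$ the maximal conductor) rather than your $\mathrm{lcm}\{\phi(q):q\le Q_\delta\}$, and transfers from $|F_1|$ to $|f|$ before rather than after taking $m$-th powers; neither change is material.
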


\begin{proof}
Write $\mc{P}_{f_1} := \{p^k : f_1(p^k) = 0\}$, so that $f(p^k) = F_1(p^k)$ for all $p^k \notin \mc{P}_{f_1}$. Let $h(n)$ be the multiplicative function given at prime powers by $h(p^k) := 1-1_{\mc{P}_{f_1}}(p^k)$. 
Combining Lemma \ref{lem:toBinCorrelft} with Theorem \ref{thm1:Tao}, applied to the multiplicative function
$g = |f|_{\alpha}h = |F_1|_{\alpha}h,$
we deduce that there is $L = O_{\delta}(1)$ such that for each $\alpha \in X$ there is a primitive character $\psi_\alpha$ of conductor $O_\delta(1)$ and a $t_\alpha \in [-Lx,Lx]$ such that
\begin{align*}
\mb{D}(|f|_{\alpha}, \psi_{\alpha}(n)n^{it_{\alpha}})^2 &= \sum_{p \leq x} \frac{1-\text{Re}(|f(p)|^{i\alpha} \bar{\psi}_{\alpha}(p)p^{-it_\alpha})}{p} \\
&= \sum_{p \leq x} \frac{1-\text{Re}(|F_1(p)|^{i\alpha} h(p) \bar{\psi}_{\alpha}(p)p^{-it_\alpha})}{p} + O(1) \ll_{\delta} 1.
\end{align*}
Now let $\nu = O_{\delta}(1)$ be the maximum of the conductors of all the $\psi_{\alpha}$, and set $m := \nu!$.
Then $\psi_{\alpha}^m(p) = 1$ for all $p\nmid m$. By the pretentious triangle inequality \eqref{eq:pretTrimthPow}, uniformly over $\alpha \in X$ we have
$$
\mb{D}(|f^m|^{i\alpha}, n^{imt_\alpha};x)^2 
\leq m^2 \mb{D}(|f|_{\alpha}, \psi_{\alpha}(n)n^{it_{\alpha}};x)^2
+ O_{\delta}(1) \ll_{\delta} 1.
$$
Note that the above bound depended only on the validity of \eqref{eq:EqualCond} and \eqref{eq:01Both}, both of which are independent of $A$, as required.
\end{proof}

\subsection{An application of the approximate Cauchy functional equation} \label{sec:CauchyFE}
We seek to understand the structure of the map $\beta \mapsto t_{\beta}$ arising in Corollary \ref{cor:structuredX}. In this direction we will use an idea of Hal\'{a}sz \cite{Hal}, improved upon in a paper by Ruzsa \cite{Hal}, which will allow us, roughly speaking, to approximate this map by a linear map $\beta \mapsto c\beta$, $c \in \mb{R}$, on some appropriate \emph{sumset} of $\tilde{X} := m \cdot X'$. 
The proof is based on a few lemmas. Here and in the sequel, given subsets $T_1,T_2$ of an abelian group $(G,+)$, we define the sumset
$$
T_1+T_2 := \{t_1+t_2 : t_1 \in T_1, t_2 \in T_2\} \\
$$ and for $m \geq 2$ the iterated sumset
$$
mT := \{t_1 + \cdots + t_m : t_i \in A  \text{ for all } 1 \leq i \leq m\}.
$$
In the sequel, for $S \subseteq \mb{R}$ and $\lambda > 0$ we will also denote by $\lambda \cdot S$ the dilation
$$
\lambda \cdot S := \{\lambda t : t \in S\}.
$$
\begin{lem}\label{lem:dilateRuzsa}
Let $D > 0$ and let $Y \subseteq [-D,D]$ be a symmetric\footnote{We say that a set $T \subseteq \mb{R}$ is \emph{symmetric} if $t \in T$ if and only if $-t \in T$.}  set containing 0, and denote its Lebesgue measure by $\lambda(Y)$. Then if $\ell \geq \llf 12D/\lambda(Y)\rrf$ the sumset $\ell Y$ contains $[-D,D]$. 
\end{lem}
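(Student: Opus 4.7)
The plan is to use the one-dimensional Brunn--Minkowski inequality together with the symmetry of $Y$ and the containment $0 \in Y$.

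First I would record basic structural facts: each iterated sumset $\ell Y$ is symmetric, contains $0$, and satisfies $\ell Y \subseteq (\ell+1)Y$ (since $0 \in Y$). Brunn--Minkowski in one dimension gives $\lambda(\ell Y) \geq \ell \lambda(Y)$, while $\ell Y \subseteq [-\ell D, \ell D]$. So once $\ell \approx 2D/\lambda(Y)$ the measure of $\ell Y$ already matches the length $2D$ of $[-D,D]$; the task is to upgrade this measure bound to containment.

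Next, I would exploit the symmetric decomposition $Y = Y^+ \cup (-Y^+)$ with $Y^+ := Y \cap [0,D]$ of measure $\lambda(Y)/2$. Partitioning tuples $(y_1,\ldots,y_\ell) \in Y^\ell$ according to sign patterns $y_i = \pm z_i$ with $z_i \in Y^+$, one obtains
\[
\ell Y \;=\; \bigcup_{k=0}^{\ell}\bigl(kY^+ - (\ell-k)Y^+\bigr).
\]
Specialising to $\ell = 2k$ gives the containment $2k Y \supseteq kY^+ - kY^+$, so it suffices to show that for $k \geq 6D/\lambda(Y)$ the sumset $kY^+$ contains some sub-interval $I \subseteq [0,kD]$ of length at least $D$: for then $kY^+ - kY^+ \supseteq I - I \supseteq [-D, D]$, and hence $\ell Y \supseteq [-D,D]$ for $\ell = 2k \geq \lfloor 12 D/\lambda(Y)\rfloor$.

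The core of the argument is then to verify this sub-interval containment. I would proceed by a direct merging/gap-shrinkage estimate: viewing $kY^+$ as a subset of $[0, kD]$, one shows that the maximal gap in $kY^+$ decreases at a rate proportional to $\lambda(Y)$ per step of the iteration. Concretely, using $0 \in Y^+$ and Brunn--Minkowski applied to $kY^+ = (k-1)Y^+ + Y^+$, any gap $(u,v) \subseteq [0, kD] \setminus kY^+$ forces a corresponding gap in $(k-1)Y^+$ shifted by $Y^+$, whose total measure budget is bounded by the ambient length minus the Brunn--Minkowski lower bound $(k-1)\lambda(Y^+) + \lambda(Y^+) = k\lambda(Y)/2$. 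Tracking this budget as $k$ grows shows that by the time $k$ reaches $6D/\lambda(Y)$, no gap of length $\geq D$ can survive, so $kY^+$ contains a sub-interval of length at least $D$ (in fact much longer, of length comparable to $kD$).

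The main obstacle is the quantitative control of the gap-shrinkage in the worst case where $Y^+$ is spread out as many small components throughout $[0,D]$. In such configurations the gaps in $kY^+$ shrink only gradually, and one must iterate Brunn--Minkowski through $\sim D/\lambda(Y)$ steps to force all gaps to vanish. The factor of $6$ in $k \geq 6D/\lambda(Y)$ emerges from this iterated bookkeeping, and doubling to $\ell = 2k$ yields the stated constant $12$.
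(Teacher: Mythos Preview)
The paper's proof is a two-line rescaling: set $Y' := (1/D)\cdot Y \subseteq [-1,1]$, note $\lambda(Y') = \lambda(Y)/D$, and invoke \cite[Lem.~(5.3)]{Ruz}, which is precisely the case $D=1$. Your proposal instead attempts a self-contained argument, which is a genuinely different and more ambitious route; the reduction via $Y = Y^+ \cup (-Y^+)$ to finding an interval of length $\geq D$ inside $kY^+$ is correct and natural.

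The genuine gap is in the core step. Brunn--Minkowski yields only $\lambda(kY^+) \geq k\lambda(Y)/2$, so the complement of $kY^+$ in $[0,kD]$ has measure at most $k(D-\lambda(Y)/2)$; for small $\lambda(Y)$ this is nearly $kD$ and says nothing about the presence or absence of a single gap of length $D$. Your assertion that ``the maximal gap in $kY^+$ decreases at a rate proportional to $\lambda(Y)$ per step'' does not follow from any measure budget: the observation that a gap $(u,v)$ in $kY^+$ forces, for each $y\in Y^+$, a gap $(u-y,v-y)$ in $(k-1)Y^+$ is correct, but these shifted copies may all coincide with a \emph{single} gap of $(k-1)Y^+$, so no shrinkage is forced. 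Converting a sumset measure lower bound into containment of a long interval is exactly the non-trivial content of Ruzsa's lemma, and his proof does not proceed via Brunn--Minkowski bookkeeping. As written, your ``iterated bookkeeping'' is an assertion rather than an argument, and you yourself flag this as the main obstacle without resolving it.
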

\begin{proof} 
Setting $Y' := \tfrac{1}{D}\cdot Y \subseteq [-1,1]$, we have $\lambda(Y') = \lambda(Y)/D$. Applying \cite[Lemma (5.3)]{Ruz} to $Y'$, we get that $\ell Y'$ contains $[-1,1]$ as soon as $\ell \geq \llf 12/\lambda(Y') \rrf = \llf 12 D/\lambda(Y)\rrf$. The claim then follows, as $y' = y'_1 + \cdots + y'_\ell$ holds with $y'_j \in Y'$ if and only if $Dy' = y_1 + \cdots + y_\ell$ with $y_j \in Y = D \cdot Y'$. 
\end{proof}
\begin{lem}\label{lem:CauchyRuz}
Let $K \geq 0$. Let $\phi: \mb{R} \ra \mb{R}$ be a bounded function, and suppose that whenever $\alpha_1,\alpha_2,\alpha_1+\alpha_2 \in [-B,B]$ we have
$$
|\phi(\alpha_1+\alpha_2) - \phi(\alpha_1) - \phi(\alpha_2)| \leq K.
$$
Then there is a $c \in \mb{R}$, specifically $c = \phi(B)/B$, with
$$
\sup_{\alpha \in [-B,B]} |\phi(\alpha)-c\alpha| \leq 3K.
$$
\end{lem}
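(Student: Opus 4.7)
Plan: I would pass to the centered function $\psi(\alpha) := \phi(\alpha) - c\alpha$ with $c = \phi(B)/B$, which inherits from $\phi$ the approximate Cauchy relation $|\psi(\alpha_1+\alpha_2) - \psi(\alpha_1) - \psi(\alpha_2)| \leq K$ (the linear correction $c\alpha$ being exactly additive) and satisfies $\psi(B) = 0$. The problem then reduces to showing $\sup_{[-B,B]}|\psi| \leq 3K$.

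From the approximate Cauchy equation I would first extract three workhorse identities. Taking $\alpha_1 = \alpha_2 = 0$ gives $|\psi(0)| \leq K$; taking $\alpha_1 = \alpha_2 = \alpha$ (valid for $|\alpha| \leq B/2$) gives a \emph{doubling bound} $|\psi(2\alpha) - 2\psi(\alpha)| \leq K$; and taking $\alpha_1 = \alpha$, $\alpha_2 = B-\alpha$ combined with $\psi(B) = 0$ gives the crucial \emph{reflection identity} $|\psi(\alpha) + \psi(B - \alpha)| \leq K$ for $\alpha \in [0, B]$.

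The heart of the proof is a self-improving bootstrap on $[0, B]$. Writing $M := \sup_{[0, B]} |\psi|$, the doubling bound yields $|\psi(\alpha)| \leq (M + K)/2$ on $[0, B/2]$ (estimating $|\psi(2\alpha)| \leq M$), while the reflection identity combined with this estimate yields $|\psi(\alpha)| \leq |\psi(B-\alpha)| + K \leq (M + K)/2 + K = (M + 3K)/2$ on $[B/2, B]$ (since $B - \alpha \in [0, B/2]$). Taking the supremum gives the self-bounding inequality $M \leq (M + 3K)/2$, hence $M \leq 3K$.

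Finally, the bound must be transferred to $[-B, 0]$. Here I would apply the approximate antisymmetry $|\psi(\alpha) + \psi(-\alpha)| \leq 2K$, which follows from the Cauchy relation at the pair $(\alpha, -\alpha)$ together with $|\psi(0)| \leq K$, and couple it with the $[0, B]$-bound just obtained. The main obstacle is reconciling constants: since $\psi(-B)$ is only bounded by $2K$ rather than being identically zero (one has $|\psi(-B) - \psi(0)| \leq K$ from the Cauchy relation at $(B, -B)$, giving $|\psi(-B)| \leq 2K$), a direct symmetrization produces a slightly weaker constant on $[-B, 0]$. The clean bound $3K$ on the negative half is the delicate point, and I would expect to recover it by running the analogous dyadic bootstrap on $[-B, 0]$ using $|\psi(-B)| \leq 2K$ as a near-zero reference, combining the resulting estimate with the symmetry bound to match the target.
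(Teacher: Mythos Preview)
Your bootstrap on $[0,B]$ is clean and correct: with $M=\sup_{[0,B]}|\psi|<\infty$ (finiteness coming from the boundedness hypothesis on $\phi$), the doubling bound gives $|\psi|\le (M+K)/2$ on $[0,B/2]$, the reflection $|\psi(\alpha)+\psi(B-\alpha)|\le K$ then gives $|\psi|\le (M+3K)/2$ on $[B/2,B]$, and $M\le(M+3K)/2$ forces $M\le 3K$. Note that this already yields the refined bound $|\psi|\le 2K$ on $[0,B/2]$.

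The negative half, however, is not just delicate but genuinely incomplete as written. The two mechanisms you propose do not reach $3K$: the ``analogous bootstrap'' with $|\psi(-B)|\le 2K$ as reference replaces the reflection bound $K$ by $K+|\psi(-B)|\le 3K$, and the resulting self-inequality $M'\le (M'+7K)/2$ only gives $M'\le 7K$; the antisymmetry $|\psi(\alpha)+\psi(-\alpha)|\le 2K$ combined with $M\le 3K$ gives only $5K$. To close the gap, use instead the \emph{translation identity}: taking $(\alpha_1,\alpha_2)=(\alpha,B)$ with $\alpha\in[-B,0]$ gives $|\psi(\alpha)-\psi(\alpha+B)|\le K$ since $\psi(B)=0$. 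For $\alpha\in[-B,-B/2]$ one has $\alpha+B\in[0,B/2]$, where $|\psi|\le 2K$, so $|\psi(\alpha)|\le 3K$ there. On $[-B/2,0]$ run your doubling bootstrap: setting $N=\sup_{[-B/2,0]}|\psi|$, for $\alpha\in[-B/2,-B/4]$ one has $2\alpha\in[-B,-B/2]$ so $|\psi(\alpha)|\le(3K+K)/2=2K$, while for $\alpha\in[-B/4,0]$ one gets $|\psi(\alpha)|\le(N+K)/2$; hence $N\le\max\{2K,(N+K)/2\}$, which forces $N\le 2K$.

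For comparison, the paper does not argue directly at all: it simply rescales to $[-1,1]$ via $\psi(\beta):=\phi(B\beta)/B$ and invokes Ruzsa's lemma \cite[Lem.~5.11]{Ruz} as a black box. Your approach is therefore a genuinely different, self-contained route that (once the negative half is patched as above) recovers the same constant $3K$ by elementary means.
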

\begin{proof} 
Defining $\psi(\beta) := \phi(B\beta)/B$ for $\beta \in [-1,1]$, this follows directly from applying \cite[Lem. 5.11]{Ruz} to $\psi$ and rescaling by $B$. 
\end{proof}

\begin{proof}[Proof of Proposition \ref{prop:approxCauchyApp}]
Let $\delta \in (0,1)$ and suppose $f: \mb{N} \ra \mb{C}$ satisfies \eqref{eq:lowerBdAssump} for $a,c \in \mb{N}$, $b,d \in \mb{Z}$ and $C \in \mb{C}$ and $C(ad- bc) \neq 0$. Thus, there is a sequence $(x_j)_j$ such that for $j \geq 1$ and $x = x_j$, \eqref{eq:EqualCond} holds. Fix $j$ large for the moment. By Corollary \ref{cor:structuredX}, there is a set $X \subseteq [-B,B]$ and an integer $m = O_{\delta}(1)$
such that for each $\beta \in X$ there is $t_{\beta} = t_{\beta}(x_j)$ with
\begin{equation}\label{eq:defForTildeX}
\mb{D}(|f^m|_{\beta}, n^{imt_\beta};x_j) = O_{\delta}(1),
\end{equation}
where we recall that $f(p^k) = F_1(p^k) = f_1(p^k)^A$ whenever $f_1(p^k) \neq 0$. The bound here is independent of $A$. \\
%
Lemma \ref{lem:dilateRuzsa} shows that when $k = \lceil 100 B/\lambda(X)\rceil \asymp_{\delta} 1$ the sumset $k X$ contains $[-B,B]$. In particular, every $\beta\in [-B,B]$ may be written as
\begin{equation}\label{eq:sumsetCond}
\beta= \alpha_1 + \cdots + \alpha_k, \quad \alpha_\ell \in X.
\end{equation}
Define $t_{\beta} := t_{\alpha_1}+\cdots + t_{\alpha_k}$ in this case (if more than one such decomposition exists, pick any of them). This defines a mapping $\beta \mapsto t_{\beta}$ on all of $[-B,B]$, such that $|t_{\beta}| \leq k A L x_j < x_j^{3/2}$ uniformly, provided $x_j$ is sufficiently large with respect to $\delta$. By the pretentious triangle inequality,
$$
\mb{D}(|f^m|_{\beta}, n^{imt_{\beta}};x_j) = \mb{D}(|f^m|^{i(\alpha_1 + \cdots + \alpha_k)}, n^{im(t_{\alpha_1} + \cdots + t_{\alpha_k})};x_j) \leq  \sum_{\ell=1}^k \mb{D}(|f^m|_{\alpha_\ell}, n^{imt_{\alpha_\ell}};x_j) \ll_{\delta} 1.
$$
Moreover, if $\beta_1,\beta_2,\beta_1+\beta_2 \in [-B,B]$ then
\begin{align*}
\mb{D}(n^{imt_{\beta_1+\beta_2}}, n^{im (t_{\beta_1} + t_{\beta_2})};x_j) &\leq \mb{D}(|f^m|_{\beta_1+\beta_2}, n^{imt_{\beta_1 + \beta_2}};x_j) + \mb{D}(|f^m|_{\beta_1} n^{imt_{\beta_1}};x_j) + \mb{D}(|f^m|_{\beta_2},n^{imt_{\beta_2}};x_j) \\
&\ll_{\delta} 1.
\end{align*}
Since $m|t_{\beta_1+\beta_2} - t_{\beta_1} - t_{\beta_2}| = O_{\delta}(x_j^{3/2}) < x_j^2$ for $j$ sufficiently large, Lemma \ref{lem:nitTo1} yields
$$
\log(1+m|t_{\beta_1+\beta_2}-t_{\beta_1} - t_{\beta_2}| \log x_j) \ll_{\delta} 1.
$$
It follows that there is $K = O_{\delta}(1)$ such that for any $\beta_1,\beta_2 \in [-B,B]$ with $\beta_1+\beta_2 \in [-B,B]$,
$$
|t_{\beta_1+\beta_2} - t_{\beta_1} - t_{\beta_2}| \leq \frac{K}{\log x_j}. 
$$
By Lemma \ref{lem:CauchyRuz} there is $r_j = r_j(A) \in \mb{R}$ such that uniformly over $\beta \in [-B,B]$,
$$
t_{\beta} = r_j \beta + O_{\delta}\left(\frac{1}{\log x_j}\right);
$$
since $|t_{\beta}| < x_j^{3/2}$, say, for $j$ large enough, we see that $|r_j| < x_j^{2}$. It follows that
$$
p^{im(t_{\beta}-r_j\beta)} = 1 + O_{\delta}\left(\frac{mK \log p}{\log x_j}\right) 
\text{ for all } p \leq x_j^{1/(3mK)}.
$$ 
Applying \eqref{eq:pretTrimthPow}, we find that, uniformly over $\beta \in [-B,B]$, 
\begin{align*}
\mb{D}(|f^m|_{\beta}, n^{imr_j\beta};x_j)^2 &= \sum_{p \leq x_j^{1/(3mK)}} \frac{1-\text{Re}(|f(p)|^{im\beta}p^{-im t_{\beta}})}{p} + O\left(\sum_{x_j^{1/(3mK)}<p \leq x_j} \frac{1}{p} + \frac{mK}{\log x_j} \sum_{p \leq x_j^{1/(3mK)}} \frac{\log p}{p}\right) 
\ll_\delta 1.
\end{align*}
Equivalently, we may write this as
\begin{equation}\label{eq:checkIndepr}
\max_{\beta \in [-B,B]} \sum_{p \leq x_j} \frac{1-\text{Re}((|f(p)|/p^{r_j})^{im\beta})}{p} \ll_{\delta} 1,
\end{equation}
emphasising once again that this bound is independent of $A$.  
We may now complete the proof of the proposition. Given \eqref{eq:checkIndepr}, after the change of variables $\beta \mapsto m\beta$ we observe that
$$
\frac{1}{2mB} \int_{-mB}^{mB} \sum_{p \leq x_j} \frac{1-\text{Re}((|f(p)|/p^{r_j})^{i\beta})}{p} d\beta
= O_{\delta}(1).
$$
Since $\frac{1}{2R} \int_{-R}^R y^{it} dt = \frac{\sin(R \log y)}{R\log y} =: \text{sinc}(R\log y)$, we deduce that
$$
\sum_{p \leq x_j} \frac{1-\text{sinc}(mB\log(|f(p)|/p^{r_j}))}{p} = O_{\delta}(1).
$$
As $1-\text{sinc}(t) \gg 1$ whenever $|t| \geq 2$, recalling the notation of the proposition and that $f(p) \neq f_1(p)$ on a sparse set, we deduce using $m \geq 1$ that
$$
\sum_{\ss{p \leq x_j \\ |F_1(p)|/p^{r_j} \notin [e^{-2/B}, e^{2/B}]}} \frac{1}{p} \leq \sum_{\ss{p \leq X \\ |\log(|f(p)|/p^{r_j})| \geq 2/(mB)}} \frac{1}{p} + O(1) = O_{\delta}(1).
$$
Recalling that $F_1 = f_1^A$, the claim follows.
\end{proof}

\section{Proof of the Converse Result} \label{sec:TrivSieve}
\begin{proof}[Proof of Proposition \ref{prop:partConv}]
Let $f: \mb{N} \ra \mb{R} \bk \{0\}$ be multiplicative, satisfying the condition
\begin{equation} \label{eq:primCondsAgain}
\sum_{p : |f(p)| \neq 1} \frac{1}{p} < \infty.
\end{equation}
Observe that if there is $c_f' > 0$ such that
\begin{equation}\label{eq:redGoalConv}
\frac{1}{\log x} \sum_{n \leq x} \frac{1_{|f(n)| = |f(n+2)|}}{n} 
\geq c_f' + o(1),
\end{equation}
then by the pigeonhole principle there is a $u_x \in \{-1,+1\}$ such that if $c_f := c_f'/2$ then
$$
\frac{1}{\log x} \sum_{n \leq x} \frac{1_{f(n) = u_xf(n+2)}}{n} 
\geq c_f + o(1).
$$
Pigeonholing once again, we can choose an infinite sequence $(x_j)_j$ on which $u = u_{x_j} \in \{-1,+1\}$ is constant, in which case we obtain
$$
\limsup_{x \ra \infty} \frac{1}{\log x} \sum_{n \leq x} \frac{1_{f(n) = uf(n+2)}}{n}  > 0,
$$
as claimed. \\
Moreover, the condition \eqref{eq:primCondsAgain} is unchanged when passing from $f$ to $|f|$. Thus, in the sequel, we replace $f$ by $|f|$ and assume that $f \geq 0$. We will show that \eqref{eq:redGoalConv} holds whenever $x \geq x_0(f)$. \\
By partial summation, it will suffice to show that
$$
\sum_{n \leq x} 1_{f(n) = f(n+2)} \geq (c_f' + o(1)) x
$$
for all $x \geq x_0(f)$. To this end, we restrict to $n \equiv 1 \pmod{2}$, so that $n$ and $n+2$ are coprime, and also such that $n,n+2$ are both squarefree. Observe that for such $n$, if $f(p) = 1$ for all $p||n$ and all $p||n+2$ then $f(n) = 1 = f(n+2)$. Thus, we obtain
\begin{equation}\label{eq:restrictToSfreeetc}
\sum_{n \leq x} 1_{f(n) = f(n+2)} \geq \sum_{\ss{n \leq x  \\ p|n(n+2) \Rightarrow f(p) = 1}} \mu^2(n)\mu^2(n+2)1_{2 \nmid n}.
\end{equation}
Take $z := (\log x)^{0.99}$. As $\sum_{p : f(p) \neq 1} p^{-1} < \infty$, we find
$$
\sum_{\ss{n \leq x \\ \exists p > z: \\
p|n(n+2), \, f(p) \neq 1}} 1  \ll \sum_{\ss{z < p \leq x \\ f(p) \neq 1}} \left(\frac{2x}{p} + 1\right) \ll x\left(\sum_{\ss{p > z \\ f(p) \neq 1}} \frac{1}{p} + \frac{1}{\log x}\right) = o(x).
$$
It follows that
$$
\sum_{\ss{n \leq x \\ p|n(n+2) \Rightarrow f(p) = 1}} \mu^2(n)\mu^2(n+2)1_{2 \nmid n} = \sum_{\ss{n \leq x \\ p|n(n+2), \, p \leq z \Rightarrow f(p) = 1}} \mu^2(n)\mu^2(n+2)1_{2 \nmid n} + o(x).
$$
If we set
$$
P(z) := \prod_{\ss{3 \leq p \leq z \\ 
f(p) \neq 1}} p,
$$
then the prime number theorem trivially gives $P(z) \leq x^{0.01}$ for large enough $x$. Thus, for any divisors $d_1,d_2|P(z)$ we have $d_1d_2 \leq x^{0.02}$. Since $2 \nmid P(z)$, by M\"{o}bius inversion we obtain
\begin{align*}
\sum_{\ss{n \leq x \\ p|n(n+2), \, p \leq z \Rightarrow f(p) = 1}} \mu^2(n)\mu^2(n+2)1_{2 \nmid n} &= \sum_{\ss{d_1,d_2 | P(z) \\ (d_1,d_2) = 1}}\mu(d_1)\mu(d_2) \sum_{\ss{n \leq x  \\ n \equiv 0 \pmod{d_1} \\ n + 2 \equiv 0 \pmod{d_2}}} \mu^2(n)\mu^2(n+2)1_{2 \nmid n}.
\end{align*}
By e.g. \cite[Lem. 3.3]{BivEK} (taking $a = 2$) the inner sum in this last expression is, for each pair of coprime divisors $d_1,d_2$ of $P(z)$,
$$
x\frac{\phi(d_1)\phi(d_2)}{2(d_1d_2)^2}\prod_{p \nmid d_1d_2}(1-\frac{2}{p^{2}}) + O_{\e}(x^{2/3+\e}).  
$$
Summing over all such $d_1,d_2$, we thus obtain
$$
Cx\sum_{\ss{d_1,d_2 | P(z) \\ (d_1,d_2) = 1}} \frac{\mu(d_1)\mu(d_2) \phi(d_1)\phi(d_2)}{(d_1d_2)^2} \prod_{p|d_1d_2}\left(1-\frac{2}{p^2}\right)^{-1} + O(x^{0.7}),
$$
where $C := \frac{1}{2} \prod_p (1-2p^{-2})$. As
$$
\sum_{\ss{d \geq 1 \\ p|d \Rightarrow f(p) \neq 1}} \frac{1}{d} \ll \exp\left(\sum_{p : f(p) \neq 1} \frac{1}{p}\right) \ll 1,
$$
due to the trivial bound $\phi(d_1)\leq d_1$ the double series in $d_1,d_2|P(z)$ converges absolutely as $z \ra \infty$, and therefore
\begin{align*}
&C\sum_{\ss{d_1,d_2 | P(z) \\ (d_1,d_2) = 1}} \frac{\mu(d_1)\mu(d_2) \phi(d_1)\phi(d_2)}{(d_1d_2)^2} \prod_{p|d_1d_2}\left(1-\frac{2}{p^2}\right)^{-1} \\
&= C \sum_{\ss{d_1,d_2 \geq 1 \\ p|d_1d_2 \Rightarrow f(p) \neq 1 \\ (d_1,d_2) = 1 \\ 2 \nmid d_1d_2}} \frac{\mu(d_1)\mu(d_2) \phi(d_1)\phi(d_2)}{(d_1d_2)^2} \prod_{p|d_1d_2}\left(1-\frac{2}{p^2}\right)^{-1} + o(1).
\end{align*}
Upon computing Euler products we find the asymptotic estimate
$$
\sum_{\ss{n \leq x \\ p|n(n+2) \Rightarrow f(p) = 1}} \mu^2(n)\mu^2(n+2)1_{2 \nmid n} = (c_f'+o(1))x,
$$
where $c_f'$ is defined by the (convergent) product
$$
c_f' := \prod_{\ss{p \geq 3 \\ f(p) \neq 1}} \left(1-\frac{2(p-1)}{p^2-2}\right) > 0.
$$
Combined with \eqref{eq:restrictToSfreeetc}, this leads to the lower bound
$$
\sum_{n \leq x} 1_{f(n) = f(n+2) \neq 0} \geq (c_f' + o(1)) x,
$$
as claimed.

\end{proof}

\section{Proof of Corollary \ref{cor:RamTau}} \label{sec:TauApp}
Let $\phi$ be a holomorphic cuspidal eigenform without CM for $\text{SL}_2(\mb{Z})$, of weight $k \geq 2$, normalised so its sequence of Fourier coefficients $(a_{\phi}(n))_n$ satisfies $a_{\phi}(1) = 1$. 
It is well-known that 
the map $n \mapsto a_{\phi}(n)$ is multiplicative, so that $f = a_{\phi}$ fits the setup of Theorem \ref{thm:b0ConcGen}.

To prove Corollary \ref{cor:RamTau} it suffices to check that \eqref{eq:thinF1} holds
for $a_{\phi}(n)$, after which the claim then follows from Theorem \ref{thm:b0ConcGen} (applied to each of the equations $a_{\phi}(n) = a_{\phi}(n+j)$, $1 \leq |j| \leq h$).

Set $\lambda_\phi(n) := a_{\phi}(n)/n^{(k-1)/2}$. By the prime number theorem for Rankin-Selberg $L$-functions (e.g. combining \cite[Cor. 1.2 and Lem. 5.1]{LWY} with \cite[Exer. 6]{IK}),
\begin{equation} \label{eq:RSPNT}
\sum_{p \leq X} |\lambda_{\phi}(p)|^2 \log p = X + O(Xe^{-c\sqrt{\log X}}),
\end{equation}
for some $c = c_{\phi} > 0$. We claim that if $y_0$ is sufficiently large and $y \geq y_0$ then
\begin{equation}\label{eq:lowbdnon1}
\sum_{\ss{y \leq p \leq 2y \\ |a_{\phi}(p)| \neq 1}} \log p \geq y/5.
\end{equation}
Indeed, suppose \eqref{eq:lowbdnon1} fails for some $y \geq y_0$, with $y_0$ large enough. From \eqref{eq:RSPNT} and Deligne's bound $|a_{\phi}(p)| \leq 2p^{(k-1)/2}$ we obtain
$$
\sum_{\ss{y \leq p \leq 2y \\ |a_{\phi}(p)| = 1}} |a_{\phi}(p)p^{-(k-1)/2}|^2 \log p \geq 9y/10 - 4\sum_{\ss{y \leq p \leq 2y \\ |a_{\phi}(p)| \neq 1}} \log p > y/10.
$$
But by the prime number theorem, the left-hand side of this last expression is
$$
\leq y^{1-k} \sum_{\ss{y \leq p \leq 2y}} \log p \ll y^{2-k},
$$
which is impossible for $y_0$ large enough and $k \geq 2$. 

By a dyadic decomposition and \eqref{eq:lowbdnon1}, we find
$$
\sum_{\ss{p \leq x \\ |a_{\phi}(p)| \neq 1}} \frac{1}{p} \geq \sum_{y_0 \leq 2^j \leq x/2} \frac{1}{2^{j+1}\log(2^{j+1})} \sum_{\ss{2^j \leq p \leq 2^{j+1} \\ |a_{\phi}(p)| \neq 1}} \log p \geq \frac{1}{10 \log 2} \sum_{y_0 \leq 2^j \leq x/2} \frac{1}{j+1} = c \log\log x + O(1),
$$
with $c = 1/(10\log 2)$, so \eqref{eq:thinF1} clearly holds as well. 
\section*{Acknowledgments}
The author would like to thank C\'{e}dric Pilatte warmly for his valuable suggestions on a previous version of this paper, which led to a shorter proof of Theorem \ref{thm:b0ConcGen} with wider applicability.

\bibliographystyle{amsplain}


\begin{dajauthors}
\begin{authorinfo}[APM]
  Alexander P. Mangerel\\
  Department of Mathematical Sciences, Durham University\\ Stockton Road, Durham, DH1 3LE, UK\\
  smangerel\imageat{}gmail\imagedot{}com \\
  \url{https://sites.google.com/site/amangerel/home}
\end{authorinfo}
\end{dajauthors}

\end{document}